\begin{document}
\numberwithin{equation}{section}

\def\Label#1{\label{#1}}

\newtheorem{Claim}{Claim}

\def\1#1{\ov{#1}}
\def\2#1{\widetilde{#1}}
\def\3#1{\mathcal{#1}}
\def\4#1{\widehat{#1}}

\def\s{s}
\def\k{\kappa}
\def\ov{\overline}
\def\span{\text{\rm span}}
\def\tr{\text{\rm tr}}
\def\GL{{\sf GL}}
\def\xo {{x_0}}
\def\Rk{\text{\rm Rk\,}}
\def\sg{\sigma}
\def \emxy{E_{(M,M')}(X,Y)}
\def \semxy{\scrE_{(M,M')}(X,Y)}
\def \jkxy {J^k(X,Y)}
\def \gkxy {G^k(X,Y)}
\def \exy {E(X,Y)}
\def \sexy{\scrE(X,Y)}
\def \hn {holomorphically nondegenerate}
\def\hyp{hypersurface}
\def\prt#1{{\partial \over\partial #1}}
\def\det{{\text{\rm det\,}}}
\def\wob{{w\over B(z)}}
\def\co{\chi_1}
\def\po{p_0}
\def\fb {\bar f}
\def\gb {\bar g}
\def\Fb {\ov F}
\def\Gb {\ov G}
\def\Hb {\ov H}
\def\zb {\bar z}
\def\wb {\bar w}
\def \qb {\bar Q}
\def \t {\tau}
\def\z{\chi}
\def\w{\tau}
\def\Z{\zeta}

\def \T {\theta}
\def \Th {\Theta}
\def \L {\Lambda}
\def\b{\beta}
\def\a{\alpha}
\def\o{\omega}
\def\l{\lambda}
\def \CFT{\text{\rm CFT}}

\def \im{\text{\rm Im }}
\def \re{\text{\rm Re }}
\def \Char{\text{\rm Char }}
\def \supp{\text{\rm supp }}
\def \codim{\text{\rm codim }}
\def \Ht{\text{\rm ht }}
\def \Dt{\text{\rm dt }}
\def \hO{\widehat{\mathcal O}}
\def \cl{\text{\rm cl }}
\def \bR{\mathbb R}
\def \bC{\mathbb C}
\def \bP{\mathbb P}
\def \bS{\mathbb S}
\def \C{\mathbb C}
\def \bL{\mathbb L}
\def \bZ{\mathbb Z}
\def \bN{\mathbb N}
\def \scrF{\mathcal F}
\def \scrK{\mathcal K}
\def \scrM{\mathcal M}
\def \cR{\mathcal R}
\def \scrJ{\mathcal J}
\def \scrA{\mathcal A}
\def \scrO{\mathcal O}
\def \scrV{\mathcal V}
\def \scrL{\mathcal L}
\def \scrE{\mathcal E}
\def \hol{\text{\rm hol}}
\def \aut{\text{\rm aut}}
\def \Aut{\text{\rm Aut}}
\def \J{\text{\rm Jac}}
\def\jet#1#2{J^{#1}_{#2}}
\def\gp#1{G^{#1}}
\def\gpo{\gp {2k_0}_0}
\def\emmp {\scrF(M,p;M',p')}
\def\rk{\text{\rm rk}}
\def\Orb{\text{\rm Orb\,}}
\def\Exp{\text{\rm Exp\,}}
\def\ess{\text{\rm Ess\,}}
\def\mult{\text{\rm mult\,}}
\def\Jac{\text{\rm Jac\,}}
\def\Span{\text{\rm span\,}}
\def\d{\partial}
\def\D{\3J}
\def\pr{{\rm pr}}
\def\dbl{[\![}
\def\dbr{]\!]}
\def\nl{|\!|}
\def\nr{|\!|}

\def \D{\text{\rm Der}\,}
\def \Rk{\text{\rm Rk}\,}
\def \ima{\text{\rm im}\,}
\def \vfi{\varphi}

\title[]
{Partial rigidity of degenerate CR embeddings into spheres}
\author [P. Ebenfelt]
{Peter Ebenfelt}

\address{P. Ebenfelt: Department of Mathematics, University of
California at San Diego, La Jolla, CA 92093-0112, USA}
\email{pebenfel@math.ucsd.edu}


\date{\number\year-\number\month-\number\day}

\abstract In this paper, we study degenerate CR embeddings $f$ of a strictly pseudoconvex hypersurface $M\subset \bC^{n+1}$ into a sphere $\bS$ in a higher dimensional complex space $\bC^{N+1}$. The degeneracy of the mapping $f$ will be characterized in terms of the ranks of the CR second fundamental form and its covariant derivatives. In 2004, the author, together with X. Huang and D. Zaitsev, established a rigidity result for CR embeddings $f$ into spheres in low codimensions. A key step in the proof of this result was to show that degenerate mappings are necessarily contained a complex plane section of the target sphere (partial rigidity). In the 2004 paper, it was shown that if the total rank $d$ of the second fundamental form and all of its covariant derivatives is $<n$ (here, $n$ is the CR dimension of $M$), then $f(M)$ is contained in a complex plane of dimension $n+d+1$. The converse of this statement is also true, as is easy to see. When the rank $d$ exceeds $n$, it is no longer true, in general, that $f(M)$ is contained in a complex plane of dimension $n+d+1$, as can be seen by examples. In this paper, we carry out a systematic study of degenerate CR mappings into spheres. We show that when the ranks of the second fundamental form and its covariant derivatives exceed the CR dimension $n$, then partial rigidity may still persist, but there is a "defect" $k$ that arises from the ranks exceeding $n$ such that $f(M)$ is only contained in a complex plane of dimension $n+d+k+1$. Moreover, this defect occurs in general, as is illustrated by examples.
\endabstract

\thanks{2000 {\em Mathematics Subject Classification}. 32H02, 32V30}

\thanks{The author is supported in part by
DMS-1001322.}

\newtheorem{Thm}{Theorem}[section]
\newtheorem{Def}[Thm]{Definition}
\newtheorem{Cor}[Thm]{Corollary}
\newtheorem{Pro}[Thm]{Proposition}
\newtheorem{Lem}[Thm]{Lemma}
\newtheorem{Rem}[Thm]{Remark}
\newtheorem{Ex}[Thm]{Example}
\newtheorem{Con}[Thm]{Conjecture}

\maketitle

\section{Introduction}

In the theory of Levi nondegenerate CR manifolds of hypersurface type, as developed by E. Cartan (\cite{Cartan32}, \cite{Cartan33}), N. Tanaka (\cite{Tanaka62}, \cite{Tanaka75}), and S.-S. Chern and J. Moser (\cite{CM74}), the role of flat models is played by the nodegenerate hyperquadrics. In the strictly pseudoconvex case, the flat model is the unit sphere $\bS=\bS^N\subset \bC^{N+1}$ (here, we use the superscript $N$ to denote the CR dimension of a real hypersurface; the real dimension of $\bS^N$ is $2N+1$), defined by
$$
\sum_{k=1}^{N+1}|z_k|^k=1.
$$
An important problem that has attracted much attention since the work of Chern and Moser is to understand the CR submanifold structure of the hyperquadrics, especially the sphere. By the work of Webster, Faran, and others (\cite{Webster79}, \cite{Faran86}, \cite{Forstneric86}, \cite{Huang99}), it is known that a local CR embedding of a piece of the sphere $M^n\subset \bS^n\subset \bC^{n+1}$ into $\bS^N\subset\bC^{N+1}$ is necessarily linear, i.e.\ contained in an $(n+1)$-plane section of $\bS^N$, {\it provided} that $N-n<n$. Consequently, any such local CR embedding is equal to the standard linear embedding of the sphere into an $(n+1)$-subspace section of $\bS^N$ composed with an automorphism of $\bS^N$.  (The space of automorphisms of $\bS^N$ is well understood; it consists of automorphisms of the projective space $\bP^{N+1}$ that preserve the Hermitian form associated with the sphere as a real hypersurface in $\bP^{N+1}$). This result was extended by the author, X. Huang, and D. Zaitsev \cite{EHZ04}, \cite{EHZ05} to the case of more general strictly pseudoconvex hypersurfaces $M^n\subset \bC^{n+1}$ with low CR complexity $\mu(M)$ (see also \cite{Webster79} for an earlier result along these lines); the CR complexity $\mu(M)$ is defined as the minimal codimension $N_0-n$ (possibly $\infty$) such that there exists a local CR embedding $f_0\colon M^n\to \bS^{N_0}$. The results in \cite{EHZ04}, \cite{EHZ05} imply that {\it rigidity} holds for local embeddings $f\colon M^n\to \bS^N$, i.e.\ any such $f$ is of the form $f=T\circ L\circ f_0$ where $L$ denotes the standard embedding of $\bS^{N_0}$ into a subspace section of $\bS^{N}$ and $T$ is an automorphism of $\bS^N$, {\it provided} that $N-n+\mu(M)<n$. Currently, much effort is dedicated to classifying local embeddings into spheres (up to the equivalence induced by compositions with sphere automorphisms) beyond the rigidity range for the codimension. We mention here in particular the works \cite{HuangJi01}, \cite{HuangJiXu06}, \cite{HuangJiYin12} that deal with what is now known as the Huang-Ji-Yin Gap Conjecture \cite{HuangJiYin09} for mappings $\bS^n\to\bS^N$. The Gap Conjecture predicts that, for a fixed $n\geq 2$, all local CR embeddings $f\colon \bS^n\to \bS^N$ for codimensions $N-n$ in a finite number of finite ranges $G_1,G_2,\ldots,G_k \subset \bZ_+$ (the "gaps"), are necessarily contained in proper complex plane sections of the target sphere. We refer to this as {\it partial rigidity}. (The dimensions of the complex plane sections are also predicted by the conjecture.) The first gap $G_1=\{1,\ldots,n-1\}$ corresponds to the rigidity result by Faran mentioned above. The second and third gap, $G_2=\{n+2,\ldots, 2n-2\}$ and $G_3=\{2n+3,\ldots, 3n-4\}$, were established in \cite{HuangJiXu06} and \cite{HuangJiYin12}, respectively. The reader is referred to \cite{HuangJiYin09},\cite{HuangJiYin12} for a description of the predicted gaps beyond the third one. For related works on classification and (partial) rigidity of CR mappings into hyperquadrics, the reader is also referred to e.g. \cite{DAngelo88}, \cite{Dangelo91}, \cite{Huang99}, \cite{BH05}, \cite{JPDLeblPeters07}, \cite{BEH08}, \cite{BEH09}, \cite{ESh10}, \cite{JPDLebl11} and references therein.

An important step in many of the works mentioned above is to establish a preliminary classification of CR mappings into spheres according to their degeneracies and to prove partial rigidity (as described above) for degenerate maps. In this paper, we shall develop a general framework for such a classification in an "intermediate" range of codimensions. To explain this more precisely and formulate our main result, we need to recall some definitions and introduce notation; we shall follow the notation and conventions in \cite{EHZ04}. We shall consider smooth CR maps $f\colon M=M^n\subset \bC^{n+1}\to \hat M=\hat M^N\subset \bC^{N+1}$, where $M=M^n$ is a strictly pseudoconvex, connected hypersurface in $\bC^{n+1}$ (and thus the superscript $n$, as mentioned above, denotes the CR dimension of $M$) and $\hat M=\hat M^N$ is a strictly pseudoconvex hypersurface in $\bC^{N+1}$; our main result will concern the situation where the target $\hat M^N$ is the sphere $\bS^N$, but the definitions and notation below are valid in the general situation. We first note, as is well known, that any non-constant CR mapping $f\colon M^n\to \hat M^N$ is (due to the strict pseudoconvexity of the source and target) a transversal CR embedding locally (i.e.\ $f$ is a transversal CR immersion). The degeneracy of a local CR embedding $f\colon M^n\to \hat M^N$ will be measured by its CR second fundamental form and covariant derivatives. We will first introduce these concepts in an elementary (extrinsic) way in order for us to quickly be able to formulate our main result. Let $p_0\in M$ and let $\hat \rho(Z,\bar Z)$ be a local defining function for $\hat M\subset \bC^{N+1}$ near $\hat p_0:=f(p_0)\in \hat M$. Let $L_1,\ldots, L_n$ be a local basis for the $(1,0)$-vector fields on $M$ near $p_0$. We shall denote by $T^{''}_{\hat p} \bC^{N+1}$ the space of $(0,1)$-covectors in $\bC^{N+1}$ at $\hat p$ and identify this space with $\bC^{N+1}$ using the basis $d\bar Z_1,\ldots,d\bar Z_{N+1}$. Using this identification, we shall denote by $\hat\rho_{\bar Z}=\bar\partial\hat\rho$. We shall also identify $T^{''}_{\hat p} \bC^{N+1}$ with $T^{''}_{\hat p} \hat M$ in the standard way. Following Lamel \cite{Lam01}, we introduce an increasing sequence of subspaces
\begin{equation}\Label{hatEk}
\hat E_1(p)\subset \hat E_2(p)\subset\ldots \hat E_l(p)\subset \ldots \subset T{''}_{\hat p} \hat M,
\end{equation}
where $\hat E_l(p)$ is defined by
\begin{equation}\Label {hatEkdef}
\hat E_l(p):=\span_\bC\left\{L^{J}(\hat\rho_{\bar Z}\circ f)\colon J\in\bZ_+^n,\ |J|\leq l\right\}.
\end{equation}
In \eqref{hatEkdef}, we have used multi-index notation $L^J:=L_1^{J_1}\ldots L_n^{J_n}$ and $|J|:=J_1+\ldots +J_n$. For a transversal local embedding $f$ (i.e.\ for any non-constant map $f$ in the strictly pseudoconvex situation), the space $\hat E_1(p)$ has dimension $n+1$. We now introduce a sequence of dimensions $0\leq d_2(p)\leq d_3(p) \leq\ldots\leq d_l(p)\leq \ldots$ defined as follows
\begin{equation}\Label{dkdef}
d_l(p):=\dim_\bC \hat E_l(p)/\hat E_1(p).
\end{equation}
We can of course also define $d_1(p)$ in this way, resulting in $d_1(p)=0$; the integer $d_1(p)$ clearly carries no information, but we shall sometimes use $d_1=0$ to simplify the notation.
We note that on an open and dense subset of $M$ these dimensions will be locally constant and on each component $U\subset M$ of this set there will be an integer $l_0$ such that 
\begin{equation}\Label{dkconst}
d_2(p)<d_3(p)<\ldots <d_{l_0}(p)=d_{l_0+1}(p)=\ldots
\end{equation}
The dimensions $d_2(p), d_{3}(p),\ldots$ can be interpreted as ranks of the CR second fundamental form of $f$ and its covariant derivatives, as will be explained in Section \ref{prelim} below. 

We now return to the special case where $\hat M^N$ is the sphere $\bS^N$. Our main result can be stated as follows:

\begin{Thm}\Label{Main0} Let $M\subset \bC^{n+1}$ be a strictly pseudoconvex, connected hypersurface, and $f\colon M\to  \bS^N\subset \bC^{N+1}$ a smooth CR mapping. Let the dimensions $d_l(p)$ be given by \eqref{dkdef}, and let $U$ be an open subset of $M$ such that $d_l=d_l(p)$, for $2\leq l\leq l_0$, are constant on $U$ and \eqref{dkconst} holds. Assume that there are integers $0\leq k_2, k_3,\ldots,k_{l_0}\leq n-1$, such that:
\begin{equation}
\begin{aligned}\Label{conds0}
d_l-d_{l-1} <&\sum_{j=0}^{k_l}(n-j),\quad l=2,\ldots, l_0,\quad (d_1=0)\\
k:=&\sum_{l=2}^{l_0}k_l<n.
\end{aligned}
\end{equation}
Then $f(M)$ is contained in a complex plane of dimension $n+d+k+1$, where $k$ is defined in \eqref{conds0} and $d:=d_{l_0}$.
\end{Thm}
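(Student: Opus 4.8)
The plan is to follow the strategy of \cite{EHZ04} but to track carefully where the hypothesis $d_l - d_{l-1} < n$ was used and replace it by the weaker bound $d_l - d_{l-1} < \sum_{j=0}^{k_l}(n-j)$. First I would normalize: compose $f$ with an automorphism of $\bS^N$ so that $p_0\in U$ maps to a convenient point, put $M$ and $\bS^N$ in Chern--Moser normal form, and choose the local $(1,0)$-frame $L_1,\ldots,L_n$ adapted to this normalization (an admissible frame, in the terminology of \cite{EHZ04}). In this setup the components of $f$ satisfy the basic identity coming from $\hat\rho\circ f\equiv 0$ on $M$, and differentiating it by the $L^J$ produces, after passing to the boundary, a system of equations relating the $\hat E_l(p)$ to explicit holomorphic data. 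The goal is to show that the holomorphic span of $\{\hat\rho_{\bar Z}\circ f,\ L^J(\hat\rho_{\bar Z}\circ f): |J|\le l_0\}$ — which has dimension $n+1+d$ by hypothesis — together with a controlled number $k$ of additional vectors coming from iterated brackets/derivatives, is preserved by all further differentiation, and hence $f(M)$ lies in the corresponding plane section.

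The heart of the matter is the inductive step. Assume $\hat E_{l-1}$ has been shown to be spanned by a "good" set of vectors; one wants to understand $\hat E_l/\hat E_{l-1}$. In \cite{EHZ04}, when $d_l-d_{l-1}<n$, one could argue that the relevant bilinear (or multilinear) form in the $L_j$-directions, being of rank $<n$ and symmetric, has a kernel direction, and one exploits commutator relations $[L_i,\bar L_j]$ (strict pseudoconvexity) to propagate the smallness of the rank into an actual linear relation among the $f$-components, forcing them into a smaller plane. Here the rank can be as large as $\sum_{j=0}^{k_l}(n-j) - 1$, which is exactly the dimension of the space of symmetric $(k_l+1)$-tensors... no — more precisely it is the dimension count that appears when one allows the "kernel" of the $l$-th fundamental form to have corank $k_l$ rather than corank $0$. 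So the step I would carry out is: show that the condition $d_l-d_{l-1}<\sum_{j=0}^{k_l}(n-j)$ forces the existence of a $(n-k_l)$-dimensional subspace of the $L$-directions along which the $l$-th covariant derivative of the second fundamental form degenerates, and that the "lost" directions contribute at most $k_l$ new vectors to the plane containing $f(M)$ at each level $l$. Summing over $l$ gives the defect $k=\sum k_l$, and the condition $k<n$ is what guarantees there is still a nontrivial common kernel direction to run the bracket argument — this is the analogue of the $d<n$ hypothesis in the original theorem.

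I expect the main obstacle to be the linear-algebra/combinatorial step that converts the rank bound $d_l-d_{l-1}<\sum_{j=0}^{k_l}(n-j)$ into a geometric statement about the kernel having codimension $\le k_l$, and then showing that enlarging the candidate plane by only $k_l$ dimensions at step $l$ really does recapture $\hat E_l$. In the original proof the symmetric form was essentially a single vector-valued quadratic form and the pigeonhole was immediate; here one is dealing with higher-order symmetric tensors (the iterated covariant derivatives), and one must check that a rank/corank dichotomy still holds in the correct form — essentially that a symmetric $k$-tensor on $\bC^n$ whose "slice spaces" span a space of dimension $<\sum_{j=0}^{k_l}(n-j)$ must vanish on a subspace of codimension $\le k_l$. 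I would isolate this as a separate lemma. Once that lemma is in hand, the propagation via the commutator identities $[L_i,\bar L_j]=\delta_{ij}\,T + (\text{lower order})$ and the closing-up of the plane section proceed exactly as in \cite{EHZ04}, with $n+d+1$ everywhere replaced by $n+d+k+1$.
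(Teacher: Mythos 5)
Your high-level architecture is right: follow the moving-frame argument of \cite{EHZ04}, isolate the place where $d_l-d_{l-1}<n$ was used, replace it by a linear-algebra lemma adapted to the weaker bound $d_l-d_{l-1}<\sum_{j=0}^{k_l}(n-j)$, and let the defects $k_l$ accumulate into $k=\sum k_l$, with $k<n$ needed to close the argument at the end. This much matches the paper. But the key lemma you propose to isolate is not the right statement, and as formulated it is false. You ask for: a symmetric vector-valued form whose values span a space of dimension $<\sum_{j=0}^{k_l}(n-j)$ must vanish on a subspace of codimension $\le k_l$. Take $n=2$, $k_l=1$, and the $\bC^2$-valued quadratic form with associated polynomials $(z_1^2,z_2^2)$: its values span a space of dimension $2<3=\sum_{j=0}^{1}(2-j)$, yet it vanishes on no line through the origin. (Already at $k_l=0$ your statement would force the second fundamental form to vanish identically whenever $d_2<n$, which is not what happens, nor what is needed.) More fundamentally, the quantity that must be bounded by $k_l$ is not the corank of a kernel of the $l$-th fundamental form; it is the number of \emph{extra normal directions} $j$ (beyond the $d$ directions spanned by the second fundamental form and its derivatives) that remain coupled to those $d$ directions through the connection forms $\omega_\#{}^j{}_{\bar\nu}$.

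The mechanism that produces this bound in the paper is a polynomial identity coming from the Chern--Moser structure equations on the sphere (vanishing pseudoconformal curvature): at each level $l$ one obtains
$\sum_\#\Omega_{(l)}^\#(z)\,\omega_\#{}^j(\bar z)=s_l^j(z)\nl z\nr^2$, where the $\Omega_{(l)}^\#$ are $d_l-d_{l-1}$ linearly independent homogeneous polynomials of degree $l$ and the $\omega_\#{}^j(\bar z)$ are linear in $\bar z$. The correct lemma (Lemma \ref{newlemma1}) states that if $m$ linearly independent $p_j(z)$ satisfy $\sum_j p_j(z)q_j(\bar z)=r(z)\nl z\nr^2$ and $m<\sum_{j=0}^{k}(n-j)$, then the space of solutions $(q,r)$ has dimension at most $k$; this generalizes Huang's lemma (the case $k=0$, forcing $q\equiv 0$). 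Its proof is not a pigeonhole/corank argument: it proceeds by induction on the number of independent solutions, analyzing the intersections $\ker Q^{(1)}\cap\ldots\cap\ker Q^{(k)}$ of the associated matrices and restricting to the variety $r^{(1)}=\ldots=r^{(k)}=0$; in the degenerate case where $r^{(k+1)}$ vanishes on that variety without lying in the ideal, one must invoke Noetherian operators attached to a primary decomposition. None of this is anticipated by your plan, and the subsequent bookkeeping (normalizing so that at each level exactly $e_l\le k_l$ new directions survive, then using $e=\sum e_l<n$ to kill the remaining connection forms via repeated applications of Huang's lemma) also differs from the ``commutator propagation'' you describe. So the proposal has the right skeleton but is missing the actual key idea, and the lemma you would rely on does not hold.
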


In the special case where $d<n$ (which clearly allows us to choose all the $k_l$ to be 0 and, hence, $k=0$), Theorem \ref{Main0} asserts that $f(M)$ is contained in a complex plane of dimension $n+d+1$. This is precisely Theorem 2.2 in \cite{EHZ04} (although the reader is cautioned that the notation in that paper is different), which was a crucial ingredient in the proof of the rigidity result in that paper. (A related result was proved in the more general setting of Levi nondegenerate (but not necessarily pseudoconvex) hypersurfaces in \cite{ESh10} and used there to prove partial rigidity for mappings into hyperquadrics.) We note, as is easy to verify, that if $f\colon M\to \bS^N$ is a CR mapping such that $f(M)$ is contained in a proper complex plane of dimension $n+r+1$, then necessarily $d\leq r$. Theorem 2.2 in \cite{EHZ04} provides the converse of this statement when $d<n$. Our main result here, Theorem \ref{Main0}, offers a "converse with a penalty" for situations where $d$ exceeds $n$ and where the "penalty" is quantified by the integer $k$. The penalty $k$ actually occurs in general, as can be seen in examples. We illustrate this fact with the following two examples:

\begin{Ex} {\rm Consider the following family of holomorphic mappings $D_t\colon \bC^{n+1}\to \bC^{N+1}$, with $N=2n+1$ and $t\in \bR$, sending $\bS^n$ into $\bS^N$:
\begin{equation}\Label{JPDt}
D_t(z,\ldots,z_{n+1}):=(z_1,\ldots,z_n,(\cos t)z_{n+1},(\sin t)z_1z_{n+1},\ldots,(\sin t)z_{n+1}^2).
\end{equation}
A straightforward calculation shows that, for $t\in (0,\pi/2)$, on a dense open set of $\bS^n$ we have $l_0=2$ and $d=d_2=n$. Thus, we have $k=k_2=1$ and Theorem \ref{Main0} "predicts" that $D_t$ will be contained in a complex plane of dimension $n+d+k+1=n+n+1+1=2n+2$, which of course is not much of a "prediction" since this is also the dimension of the target space $\bC^{N+1}$ in this case. However, it is also easily seen that $D_t$, for $t\in (0,\pi/2)$, is not contained in any proper complex plane, which shows that the conclusion of Theorem \ref{Main0} cannot be improved to yield a plane of dimension $n+d+1$ in this example. The family of mappings in \eqref{JPDt} was discovered by D'Angelo \cite{DAngelo88}, who also showed that $D_{t_1}$ and $D_{t_2}$ are inequivalent (in the sense described above) for $t_1\neq t_2$.
}
\end{Ex}

\begin{Ex} {\rm Now, consider the following family of holomorphic mappings $H_{s,t}\colon \bC^{n+1}\to \bC^{N+1}$, with $N=3n+2$ and $s,t\in \bR$, sending $\bS^n$ into $\bS^N$:
\begin{multline}\Label{Huangst}
H_{s,t}(z,\ldots,z_{n+1}):=(z_1,\ldots,z_{n-1},(\cos s)z_n,z_{n+1},(\sin s)z_1z_{n},\ldots,\\(\sin s)z_{n}^2, (\sin s\cos t)z_nz_{n+1},(\sin s\sin t) z_1z_nz_{n+1},\ldots, (\sin s\sin t)z_nz_{n+1}^2).
\end{multline}
As above, a straightforward calculation shows, for $s,t\in (0,\pi/2)$, that on a dense open set of $\bS^n$ we have $l_0=3$, $d_2=d_3=n$, and hence $d=d_2+d_3=2n$. Thus, we have $k_2=k_3=1$, $k=k_2+k_3=2$ and Theorem \ref{Main0} "predicts" that $H_{s,t}$ will be contained in a complex plane of dimension $n+d+k+1=n+2n+2+1=3n+3$, which again is also the dimension of the target space $\bC^{N+1}$. As above, it is easily seen that $H_{s,t}$, for $s,t\in (0,\pi/2)$, is not contained in any proper complex plane, showing that the conclusion of Theorem \ref{Main0} is sharp also in this case.
The mapping $H_{s,t}$ and further examples of this type were given in in \cite{HuangJiYin09}.
}
\end{Ex}

We conclude this introduction with the following remark, which provides a situation where the assumptions in \eqref{conds0} are automatically satisfied.

\begin{Rem} {\rm We note that if the codimension satisfies
$$
N-n<\frac{n(n+1)}{2},
$$
then necessarily there are $0\leq k_2,k_3,\ldots, k_{l_0}\leq n-1$ such that the two conditions in \eqref{conds0} are satisfied. Indeed, note that
$$
\sum_{j=0}^{n-1} (n-j)=\frac{n(n+1)}{2},
$$
and, hence, the existence of $k_l$ such that the first condition is satisfied follows immediately (since $d\leq N-n<n(n+1)/2$). Let us choose the $k_l$ minimal, i.e.\ such that
$$
d_l-d_{l-1}\geq \sum_{j=0}^{k_l-1}(n-j),
$$
where $d_1$ and the sum when $k_l=0$ are understood to be 0. By telescoping $d=d_{l_0}$, we conclude that
\begin{equation}\Label{sum10}
d\geq\sum_{l=2}^{l_0}\sum_{j=0}^{k_l-1}(n-j).
\end{equation}
A moments reflection will convince the reader that for any collection of $0\leq k_2,\ldots, k_{l_0}\leq n-1$ such that $k\geq n$, the double sum in \eqref{sum10} will be $\geq$ the supremum over all corresponding sums with $0\leq k_2',\ldots, k'_{l'_0}\leq n-1$ such that $k':=k'_2+\ldots+k'_{l'_0}=n$. Now, with $k=n$ we obtain in \eqref{sum10}
\begin{equation}\Label{sum20}
\begin{aligned}
d\geq\sum_{l=2}^{l_0}\sum_{j=0}^{k_l-1}(n-j) &= \sum_{l=2}^{l_0}\left(k_ln-\frac{k_l(k_l+1)}{2}\right)\\
&=\left(n+\frac{1}{2}\right)k-\frac{1}{2}\sum_{l=2}^{l_0}k_l^2\\
&=\left(n+\frac{1}{2}\right)n-\frac{1}{2}\sum_{l=2}^{l_0}k_l^2
&\geq \frac{n(n+1)}{2},
\end{aligned}
\end{equation}
where in the last step we used that fact that
$$
\sum_{l=2}^{l_0}k_l^2\leq \left(\sum_{l=2}^{l_0}k_l\right)^2=k^2=n^2.
$$
Since $d\leq N-n<n(n+1)/2$, we conclude that also the second condition in \eqref{conds0} must hold. Of course, the conclusion of Theorem \ref{Main0} will have no meaning unless $n+d+k< N$.
}
\end{Rem}

\section{Preliminaries}\Label{prelim} We shall use the setup, notation and conventions in \cite{EHZ04}. For the reader's convenience, we recall the setup here. Let $M$ be a strictly pseudoconvex hypersurface in $\bC^{n+1}$. In particular, $M=M^n$ is CR manifold of real dimension $2n+1$ with a rank $n$ CR bundle (the bundle of (0,1)-vector fields on $M$), denoted $T^{0,1}M$.  Near a point $p\in M$, we let $\theta$ be a contact form (fixing a pseudohermitian structure as in \cite{Webster78}) and $T$ its characteristic (or Reeb) vector field, i.e.\ T is the unique real vector field satisfying $T \lrcorner d\theta = 0$ and $\langle \theta, T \rangle = 1$.  We complete $\theta$ to an admissible coframe $(\theta, \theta^{1},\ldots,\theta^{n})$ for the bundle $T'M$ of $(1,0)$-cotangent vectors (the cotangent vectors that annihilate $T^{0,1}M)$.  The coframe is called admissible (following the terminology in \cite{Webster78}) if $\langle \theta^{\alpha}, T \rangle = 0$, for $\alpha = 1, \ldots, n$.
We let $L_{1},\ldots,L_{n}$ be a frame for the bundle $T^{0,1}M$, near $p$, such that $(T, L_{1},\ldots,L_{n},L_{\bar{1}},\ldots,L_{\bar{n}})$ is a frame for $\mathbb{C}TM$, dual to the coframe $(\theta, \theta^{1},\ldots,\theta^{n},\theta^{\bar{1}},\ldots,\theta^{\bar{n}})$.  We use the notation that $L_{\bar{\alpha}} = \bar{L}_{\alpha}$, etc.  Relative to this frame, let $(g_{\alpha\bar{\beta}})$ denote the matrix of the Levi form.  We shall, as we may, require that our choice of admissible coframe is such that the Levi form is represented by the identity matrix, $g_{\alpha\bar\beta}=\delta_{\alpha\bar\beta}$, although we shall retain the notation $g_{\alpha\bar\beta}$ (and not always use the fact that it is the identity matrix).

We denote by $\nabla$ the Tanaka-Webster pseudohermitian connection (\cite{Tanaka75}, \cite{Webster78}), given relative to the chosen frame and coframe by
\begin{equation*}
\nabla L_{\alpha} := \omega_{\alpha}^{\:\:\beta}\otimes L_{\beta},
\end{equation*}
where the connection 1-forms $\omega_{\alpha}^{\:\:\beta}$ are determined by the conditions
\begin{equation}\Label{psh1}
\begin{aligned}
d\theta^{\beta} &= \theta^{\alpha}\wedge\omega_{\alpha}^{\:\:\beta} \qquad \textrm{mod}\theta\wedge\theta^{\bar{\alpha}}, \\
dg_{\alpha\bar{\beta}} &= \omega_{\alpha\bar{\beta}} + \omega_{\bar{\beta}\alpha}.
\end{aligned}
\end{equation}
Here and for the remainder of this paper, we use the summation convention (indices appearing both as subscripts and superscripts are summed over) and the Levi form to lower and raise indices as usual, e.g.\ $\omega_{\alpha\bar\beta}:=g_{\gamma\bar\beta}\omega_{\alpha}{}^\gamma$; moreover, lower case Greek indices $\alpha,\beta$, etc.\ will run over the index set $\{1,2,\ldots, n\}$. We may rewrite the first equation in \eqref{psh1} as
\begin{equation}\Label{psh2}
d\theta^{\beta} = \theta^{\alpha}\wedge\omega_{\alpha}^{\:\:\:\beta} + \theta\wedge\tau^{\beta}, \qquad \tau^{\beta} = A^{\beta}_{\:\:\:\bar{\nu}}\theta^{\bar{\nu}}, \qquad A^{\alpha\beta} = A^{\beta\alpha}
\end{equation}
for a suitably determined torsion matrix $(A^{\beta}_{\:\:\:\bar{\nu}})$.  We also recall the fact that the coframe $(\theta, \theta^{1}, \ldots, \theta^{n})$ is admissible if and only if $d\theta = i g_{\alpha \bar{\beta}}\theta^{\alpha}\wedge \theta^{\bar{\beta}}$. For a fixed pseudohermitian structure $\theta$ and choice of Levi form $g_{\alpha\bar\beta}$, the 1-forms $\theta^\alpha$ in an admissible coframe are determined up to unitary transformations.

Let $f:M \rightarrow \hat{M} $ be a transversal, local CR embedding of a strictly pseudoconvex, hypersurface $M\subset \mathbb{C}^{n+1}$ into another such $\hat{M}\subset \mathbb{C}^{N+1}$. Due to the strict pseudoconvexity of $M$ and $\hat M$, as mentioned in the introduction, $f$ is transversal and a local embedding if and only if $f$ is non-constant. By Corollary 4.2 in \cite{EHZ04}, given any admissible coframe $(\theta, \theta^\alpha )=(\theta, \theta^\alpha )_{\alpha=1}^{n}$ as above, defined locally near $p\in M$ there exist admissible coframes $( \hat{\theta}, \hat{\theta}^A )=( \hat{\theta}, \hat{\theta}^A )_{A=1}^{N}$ on $\hat{M}$, defined near $f(p) \in \hat{M}$, so that:
\begin{equation}
f^{\ast}(\hat{\theta},\: \hat{\theta}^\alpha, \: \hat{\theta}^a)=(\theta,\: \theta^\alpha,\: 0),
\label{adaptedcond}
\end{equation}
and such that also $\hat g_{A\bar B}=\delta_{A\bar B}$. Here and for the remainder of this paper, we shall use the convention that lower case Greek indices, as mentioned above, vary from 1 to n, capital Roman letters vary from 1 to $N$, i.e.\ $A,B\ldots \in \{1,...,N \}$, and lower case Roman letters vary in the normal direction, i.e.\ $a,b,\ldots \in \{n+1,...,N \} $. We shall say that the coframe $( \hat{\theta}, \hat{\theta}^A )$ is adapted to $f$ with respect to $(\theta, \theta^\alpha )$ if it satisfies \eqref{adaptedcond} and the Levi form in this coframe is the identity matrix.

Equation \eqref{psh2} implies that when $(\theta, \theta^{A})$ is adapted to $M$, if the pseudoconformal connection matrix of $(\hat{M}, \hat{\theta})$ is $\hat{\omega}_{B}^{\:\:\:A}$, then that of $(M, \theta)$ is the pullback of $\hat{\omega}_{\beta}^{\:\:\:\alpha}$.  The pulled back torsion $\hat{\tau}^{\alpha}$ is $\tau^{\alpha}$, so omitting the $\hat{}$ over these pullbacks will not cause any ambiguity and we shall do so from now on.

The matrix of 1-forms $(\omega_{\alpha}^{\:\:\:b})$ pulled back to $M$ defines the \emph{second fundamental form} of the embedding $f:M \rightarrow \hat{M}$, denoted $\Pi_f\colon T^{1,0}M\times T^{1,0}M\to T^{1,0}\hat M/f_*T^{1,0}M$, as follows.  Since $\theta^{b} = 0$ on $M$, equation \eqref{psh2} implies that on $M$,
\begin{equation}
\omega_{\alpha}^{\:\:\:b} \wedge\theta^{\alpha} + \tau^{b}\wedge\theta = 0,
\end{equation}
which implies that
\begin{equation}\Label{EHZ4.4}
\omega_{\alpha}^{\:\:\:b} = \omega_{\alpha \:\:\: \beta}^{\:\:\:b}\theta^{\beta}, \qquad \omega_{\alpha \:\:\: \beta}^{\:\:\:b} = \omega_{\beta \:\:\: \alpha}^{\:\:\:b}, \qquad \tau^{b} = 0.
\end{equation}
The second fundamental form is now defined by
\begin{equation}\Label{SFFdef}
\Pi_f(L_\alpha,L_\beta):=\omega_\alpha{}^a{}_\beta [L_a],
\end{equation}
where $[L_a]$ denotes the equivalence class of the normal vector fields $L_a$ in $T^{1,0}\hat M/f_*T^{1,0}M$. Following \cite{EHZ04} we identify the CR-normal space $T_{f(p)}^{1,0}\hat{M}/f_*T_{p}^{1,0}M$, also denoted by $N_{p}^{1,0}{M}$, with $\mathbb{C}^{N-n}$ by choosing the equivalence classes of $L_{a}$ as a basis. Therefore, for fixed $\alpha, \beta$, we view the component vector $(\omega_{\alpha}{}^a{}_ \beta)_{a=n+1}^{N}$ as an element of $\mathbb{C}^{N-n}$.  By also viewing the second fundamental form as a section over $M$ of the bundle
$T^{1,0}M\otimes N^{1,0}{M} \otimes T^{1,0}M$, we may use the pseudohermitian connections on $M$ and $\hat{M}$ to define the covariant differential
\begin{equation*}
\nabla \omega_{\alpha\:\:\beta}^{\:\:a} = d\omega_{\alpha\:\:\beta}^{\:\:a} - \omega_{\mu\:\:\beta}^{\:\:a}\omega_{\alpha}^{\:\:\mu} + \omega_{\alpha\:\:\beta}^{\:\:b}\omega_{b}^{\:\:a} - \omega_{\alpha\:\:\mu}^{\:\:a}\omega_{\beta}^{\:\:\mu}.
\end{equation*}
We write $\omega_{\alpha\:\:\beta ; \gamma}^{\:\:a}$ to denote the component in the direction $\theta^{\gamma}$ and define higher order derivatives inductively as:
\begin{equation*}
\nabla \omega_{\gamma_{1}\:\:\gamma_{2};\gamma_{3}\ldots\gamma_{j}}^{\:\:a} = d\omega_{\gamma_{1}\:\:\gamma_{2};\gamma_{3}\ldots\gamma_{j}}^{\:\:a} + \omega_{\gamma_{1}\:\:\gamma_{2};\gamma_{3}\ldots\gamma_{j}}^{\:\:b}\omega_{b}^{\:\:a} - \sum_{l=1}^{j}\omega_{\gamma_{1}\:\:\gamma_{2};\gamma_{3}\ldots\gamma_{l-1}\mu
\gamma_{l+1}\ldots\gamma_{j}}^{\:\:a}\omega_{\gamma_{l}}^{\:\:\mu}.
\end{equation*}
We also consider the component vectors of higher order derivatives as elements of $\mathbb{C}^{N-n}\cong N_p^{1,0}M$ and define an increasing sequence of vector spaces
\begin{equation*}
E_{2}(p) \subseteq \ldots \subseteq E_{l}(p) \subseteq \ldots \subseteq \mathbb{C}^{N-n}\cong N_p^{1,0}M
\end{equation*}
by letting $E_{l}(p)$ be the span of the vectors
\begin{equation}\Label{Eldef}
(\omega_{\gamma_{1}\:\:\gamma_{2};\gamma_{3}\ldots\gamma_{j}}^{\:\:a})_{a=n+1}^{N}, \qquad \forall\, 2 \leq j \leq l, \gamma_{j}\in \{1,\ldots,n\},
\end{equation}
evaluated at $p \in M$.  The Levi form defines an isomorphism $$T'_{f(p)}\hat M/\hat E_1(p)\cong T^{1,0}_{f(p)}\hat M/f_*T^{1,0}_pM=N^{1,0}_pM,$$
and it is shown in \cite{EHZ04} (Sections 4 and 7) that $E_l(p)\cong \hat E_l(p)/\hat E_1(p)$,  where the $\hat E_l(p)$ are as defined in \eqref{hatEkdef}. We let $d_l(p)$ be the dimension of $E_l(p)$, which is then consistent the definition \eqref{dkdef}. 

We shall also need the pseudoconformal connection and structure equations introduced by Chern and Moser in \cite{CM74}.  Let $Y$ be the bundle of coframes $(\omega,\omega^{\alpha},\omega^{\bar{\alpha}}, \phi)$ on the real ray bundle $\pi_{E}:E\rightarrow M$ of all contact forms defining the same orientation of $M$, such that $d\omega = ig_{\alpha\bar{\beta}}\omega^{\alpha}\wedge\omega^{\bar{\beta}} + \omega\wedge\phi$ where $\omega^{\alpha} \in \pi_{E}^{*}(T'M)$ and $\omega$ is the canonical 1-form on $E$.  In \cite{CM74} it was shown that these forms can be completed to a full set of invariants on $Y$ given by the coframe of 1-forms
\[ (\omega, \omega^{\alpha}, , \omega^{\bar{\beta}}, \phi, \phi_{\beta}{}^{\alpha}, \phi_{\bar\beta}{}^{\bar\alpha}, \phi^{\alpha}, \phi^{\bar{\alpha}}, \psi) \] which define the pseudoconformal connection on $Y$.  These forms satisfy the following structure equations, which we will use extensively (see \cite{CM74} and its appendix):
\begin{equation}\Label{CMstructure}
\begin{aligned}
& \phi_{\alpha\bar{\beta}} + \phi_{\bar{\beta}\alpha} = g_{\alpha\bar{\beta}}\phi, \nonumber \\
& d\omega = i\omega^{\mu}\wedge\omega_{\mu} + \omega \wedge \phi, \nonumber \\
& d\omega^{\alpha} = \omega^{\mu}\wedge\phi_{\mu}^{\:\:\alpha} + \omega\wedge\phi^{\alpha}, \nonumber \\
& d\phi = i\omega_{\bar{\nu}}\wedge\phi^{\bar{\nu}} + i\phi_{\bar{\nu}}\wedge\omega^{\bar{\nu}} + \omega \wedge \psi, \nonumber \\
& d\phi_{\beta}^{\:\:\alpha} = \phi_{\beta}^{\:\:\mu}\wedge\phi_{\mu}^{\:\:\alpha} + i\omega_{\beta}\wedge\phi^{\alpha} - i\phi_{\beta}\wedge\omega^{\alpha} -i\delta_{\beta}^{\:\:\alpha}\phi_{\mu}\wedge\omega^{\mu} - \frac{\delta_{\beta}^{\:\:\alpha}}{2}\psi\wedge\omega + \Phi_{\beta}^{\:\:\alpha}, \nonumber \\
& d\phi^{\alpha} = \phi\wedge\phi^{\alpha} + \phi^{\mu}\wedge\phi_{\mu}^{\:\:\alpha} - \frac{1}{2}\psi\wedge\omega^{\alpha} + \Phi^{\alpha}, \nonumber \\
& d\psi = \phi\wedge\psi + 2i\phi^{\mu}\wedge\phi_{\mu} + \Psi.
\end{aligned}
\end{equation}
Here the 2-forms $\Phi_{\beta}^{\:\:\alpha}, \Phi^{\alpha}, \Psi$ constitute the \emph{pseudoconformal curvature} of $M$.  We may decompose $\Phi_{\beta}^{\:\:\alpha}$ as follows
\begin{equation*}
\Phi_{\beta}^{\:\:\alpha} = S_{\beta\:\:\:\mu\bar{\nu}}^{\:\:\alpha}\omega^{\mu}\wedge\omega^{\bar{\nu}} + V_{\beta\:\:\:\mu}^{\:\:\alpha}\omega^{\mu}\wedge\omega + V^{\alpha}_{\:\:\:\beta\bar{\nu}}\omega\wedge\omega^{\bar{\nu}}.
\end{equation*}
We will also refer to the tensor $S_{\beta\:\:\:\mu\bar{\nu}}^{\:\:\alpha}$ as the pseudoconformal curvature of $M$ (as $S_{\beta\:\:\:\mu\bar{\nu}}^{\:\:\alpha}$ and its covariant derivatives in fact determine $\Phi_{\beta}^{\:\:\alpha}, \Phi^{\alpha}, \Psi$).  The curvature tensor $ S_{\beta\:\:\:\mu\bar{\nu}}^{\:\:\alpha}$ is required to satisfy certain trace and symmetry conditions (see \cite{CM74}), but for the purposes of this paper, the important point to emphasize is that \emph{for a sphere, the pseudoconformal curvature vanishes}.

If we fix a contact form $\theta$, i.e.\ a section $M \rightarrow E$, then any admissible coframe $(\theta, \theta^{\alpha})$ for $M$ defines a unique section $M \rightarrow Y$ under which the pullbacks of $(\omega, \omega^{\alpha})$ coincide with $(\theta, \theta^{\alpha})$ and the pullback of $\phi$ vanishes.  As in \cite{Webster78} we use this section to pull the pseudoconformal connection forms back to $M$.  We can express the pulled back tangential pseudoconformal curvature tensor $ S_{\beta\:\:\:\mu\bar{\nu}}^{\:\:\alpha}$ in terms of the tangential pseudohermitian curvature tensor $ R_{\beta\:\:\:\mu\bar{\nu}}^{\:\:\alpha}$ by
\begin{equation}\Label{CRpshcurv}
 S_{\alpha\bar{\beta}\mu\bar{\nu}} = R_{\alpha\bar{\beta}\mu\bar{\nu}} - \frac{R_{\alpha\bar{\beta}}g_{\mu\bar{\nu}} + R_{\mu\bar{\beta}}g_{\alpha\bar{\nu}} + R_{\alpha\bar{\nu}}g_{\mu\bar{\beta}} + R_{\mu\bar{\nu}}g_{\alpha\bar{\beta}}}{n+2} + \frac{R(g_{\alpha\bar{\beta}}g_{\mu\bar{\nu}} + g_{\alpha\bar{\nu}}g_{\mu\bar{\beta}})}{(n+1)(n+2)},
\end{equation}
where
\begin{equation*}
R_{\alpha\bar{\beta}} := R_{\mu\:\:\:\alpha\bar{\beta}}^{\:\:\mu} \qquad \textrm{and } R:= R_{\mu}^{\:\:\mu}
\end{equation*}
are respectively the pseudohermitian Ricci and scalar curvature of $(M, \theta)$.  This formula expresses the fact that $S_{\alpha\bar{\beta}\mu\bar{\nu}}$ is the ``traceless component" of $R_{\alpha\bar{\beta}\mu\bar{\nu}}$ with respect to the decomposition of the space of all tensors with the symmetry conditions of $S_{\alpha\bar{\beta}\mu\bar{\nu}}$ into the direct sum of the subspace of tensors with trace zero and the subspace of \emph{conformally flat tensors}, i.e. tensors of the form
\begin{equation}\Label{confflat}
T_{\alpha\bar{\beta}\mu\bar{\nu}} = H_{\alpha\bar{\beta}}g_{\mu\bar{\nu}} + H_{\mu\bar{\beta}}g_{\alpha\bar{\nu}} + H_{\alpha\bar{\nu}}g_{\mu\bar{\beta}} + H_{\mu\bar{\nu}}g_{\alpha\bar{\beta}},
\end{equation}
where $(H_{\alpha\bar{\beta}})$ is any Hermitian matrix.  Observe that covariant derivatives of conformally flat tensors are conformally flat, since $\nabla g_{\alpha\bar{\beta}} = 0$ by definition (see the second equation of \eqref{psh1}).

The following result relates the pseudoconformal and pseudohermitian connection forms.  It is alluded to in \cite{Webster78} and a proof may be found in \cite{EHZ04}, where the result appears as Proposition 3.1. 

\begin{Pro}\Label{EHZProp3.1}
Let $M$ be a smooth Levi-nondegenerate CR-manifold of hypersurface type with CR dimension $n$, and with respect to an admissible coframe $(\theta, \theta^{\alpha})$ let the pseudoconformal and pseudohermitian connection forms be pulled back to $M$ as above.  Then we have the following relations:
\begin{equation}
\phi_{\beta}^{\:\:\alpha} = \omega_{\beta}^{\:\:\alpha} + D_{\beta}^{\:\:\alpha}\theta, \qquad \phi^{\alpha} = \tau^{\alpha} + D_{\mu}^{\:\:\alpha}\theta^{\mu} + E^{\alpha}\theta, \qquad \psi = iE_{\mu}\theta^{\mu} - iE_{\bar{\nu}}\theta^{\bar{\nu}} + B\theta,
\end{equation}
where
\begin{equation}
\begin{aligned}
&D_{\alpha\bar{\beta}} := \frac{iR_{\alpha\bar{\beta}}}{n+2} - \frac{iRg_{\alpha\bar{\beta}}}{2(n+1)(n+2)}, \nonumber \\
&E^{\alpha} := \frac{2i}{2n + 1}(A^{\alpha\mu}_{\:\:\:\:;\mu} - D^{\bar{\nu}\alpha}_{\:\:\:\:;\bar{\nu}}), \nonumber \\
&B := \frac{1}{n}(E^{\mu}_{\:\:\:;\mu} + E^{\bar{\nu}}_{\:\:\:;\bar{\nu}} - 2A^{\beta\mu}A_{\beta\mu} + 2D^{\bar{\nu}\alpha}D_{\bar{\nu}\alpha}).
\end{aligned}
\end{equation}
\end{Pro}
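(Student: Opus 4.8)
The plan is to exploit the fact that fixing the contact form $\theta$ and the admissible coframe $(\theta,\theta^\alpha)$ determines a section $M\to Y$ along which the pseudoconformal forms are pulled back, and under which $(\omega,\omega^\alpha)$ pull back to $(\theta,\theta^\alpha)$ while $\phi$ pulls back to $0$. All computations below take place among the pulled-back forms on $M$, expanded in the coframe $\{\theta,\theta^\mu,\theta^{\bar\nu}\}$. Feeding $\phi\mapsto 0$ into the equation for $d\omega$ in \eqref{CMstructure} merely recovers the admissibility relation $d\theta=ig_{\alpha\bar\beta}\theta^\alpha\wedge\theta^{\bar\beta}$, so the real content begins with the equation for $d\omega^\alpha$.

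First I would compare the pulled-back equation for $d\omega^\alpha$, namely $d\theta^\alpha=\theta^\mu\wedge\phi_\mu{}^\alpha+\theta\wedge\phi^\alpha$, with the pseudohermitian equation \eqref{psh2}, $d\theta^\alpha=\theta^\mu\wedge\omega_\mu{}^\alpha+\theta\wedge\tau^\alpha$. Writing the differences $\phi_\mu{}^\alpha-\omega_\mu{}^\alpha$ and $\phi^\alpha-\tau^\alpha$ as general linear combinations of $\theta,\theta^\nu,\theta^{\bar\nu}$ and using linear independence of the resulting $2$-forms, one finds that the $\theta^{\bar\nu}$-component of $\phi_\mu{}^\alpha-\omega_\mu{}^\alpha$ vanishes, that the $\theta^\nu$-coefficient is symmetric in its lower indices, and that the $\theta^\nu$-coefficient of $\phi^\alpha-\tau^\alpha$ equals the $\theta$-coefficient $D_\mu{}^\alpha$ of $\phi_\mu{}^\alpha-\omega_\mu{}^\alpha$. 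Imposing next the skew-Hermitian normalization shared by both connections---namely $\phi_{\alpha\bar\beta}+\phi_{\bar\beta\alpha}=g_{\alpha\bar\beta}\phi$ from \eqref{CMstructure}, which pulls back to skew-Hermiticity since $\phi\mapsto 0$, together with $\omega_{\alpha\bar\beta}+\omega_{\bar\beta\alpha}=0$ from \eqref{psh1}---forces the symmetric $\theta^\nu$-coefficient to vanish and $D_{\alpha\bar\beta}$ to be skew-Hermitian. This already yields the first two displayed identities $\phi_\beta{}^\alpha=\omega_\beta{}^\alpha+D_\beta{}^\alpha\theta$ and $\phi^\alpha=\tau^\alpha+D_\mu{}^\alpha\theta^\mu+E^\alpha\theta$, with $D$ and $E$ still to be identified. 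The shape of $\psi$ then follows from the pulled-back equation for $d\phi$, which reads $0=i\theta_{\bar\nu}\wedge\phi^{\bar\nu}+i\phi_{\bar\nu}\wedge\theta^{\bar\nu}+\theta\wedge\psi$: substituting the expression just obtained for $\phi^\alpha$ (and its conjugate) and matching the $\theta\wedge\theta^\mu$ and $\theta\wedge\theta^{\bar\nu}$ components pins the $(1,0)$- and $(0,1)$-parts of $\psi$ to $iE_\mu$ and $-iE_{\bar\nu}$, while the vanishing of the $(2,0)$- and $(0,2)$-parts is guaranteed by the torsion symmetry $A^{\alpha\beta}=A^{\beta\alpha}$, leaving only the real coefficient $B$ of $\theta$.

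To obtain the explicit formulas I would turn to the curvature normalizations that characterize the Chern--Moser connection, i.e.\ the trace conditions on $\Phi_\beta{}^\alpha,\Phi^\alpha,\Psi$. Substituting the ansatz into the structure equation for $d\phi_\beta{}^\alpha$ and extracting the $\theta^\mu\wedge\theta^{\bar\nu}$ part expresses the pseudoconformal curvature $S_{\beta}{}^\alpha{}_{\mu\bar\nu}$ as the pseudohermitian curvature $R_\beta{}^\alpha{}_{\mu\bar\nu}$ plus explicit terms linear in $D$ coming from $i\omega_\beta\wedge\phi^\alpha$, $-i\phi_\beta\wedge\omega^\alpha$ and $-i\delta_\beta{}^\alpha\phi_\mu\wedge\omega^\mu$; requiring $S$ to be fully trace-free is exactly the decomposition \eqref{CRpshcurv}, and solving the trace equation yields $D_{\alpha\bar\beta}=\tfrac{i}{n+2}R_{\alpha\bar\beta}-\tfrac{i}{2(n+1)(n+2)}Rg_{\alpha\bar\beta}$. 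Repeating the procedure with the structure equation for $d\phi^\alpha$, extracting the $\theta^\mu\wedge\theta$ and $\theta\wedge\theta^{\bar\nu}$ components and imposing the vanishing of the trace of $\Phi^\alpha$, produces a linear equation for $E^\alpha$ whose solution is $E^\alpha=\tfrac{2i}{2n+1}(A^{\alpha\mu}{}_{;\mu}-D^{\bar\nu\alpha}{}_{;\bar\nu})$; here the covariant derivatives arise from differentiating the torsion term $\tau^\alpha=A^\alpha{}_{\bar\nu}\theta^{\bar\nu}$ and the $D\theta$ term. Finally, inserting everything into the equation for $d\psi$ and imposing the last trace normalization on $\Psi$ determines $B=\tfrac1n(E^\mu{}_{;\mu}+E^{\bar\nu}{}_{;\bar\nu}-2A^{\beta\mu}A_{\beta\mu}+2D^{\bar\nu\alpha}D_{\bar\nu\alpha})$.

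The main obstacle is the last paragraph: the bookkeeping needed to separate, in each structure equation, the genuine curvature $2$-forms from the connection contributions, and to apply the correct Chern--Moser trace conditions that fix the numerical coefficients $\tfrac1{n+2}$, $\tfrac1{2(n+1)(n+2)}$, $\tfrac{2i}{2n+1}$ and $\tfrac1n$. In particular one must carefully commute covariant derivatives and use the symmetries of $R_{\alpha\bar\beta\mu\bar\nu}$ (and the symmetry $A^{\alpha\beta}=A^{\beta\alpha}$ of the torsion) to see that the trace equations are consistent and have exactly the stated solutions. The first two displayed identities, by contrast, are essentially forced by the structure equations and the shared skew-Hermitian normalization and require no curvature input.
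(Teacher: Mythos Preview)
The paper does not supply its own proof of this proposition: it states the result and refers the reader to \cite{EHZ04}, where it appears as Proposition~3.1 (with the relations themselves already alluded to in \cite{Webster78}). Your outline is exactly the standard argument carried out there---compare the pulled-back Chern--Moser structure equations with the pseudohermitian ones \eqref{psh1}--\eqref{psh2} to pin down the \emph{form} of $\phi_\beta{}^\alpha$, $\phi^\alpha$, $\psi$, and then impose the Chern--Moser trace normalizations on $\Phi_\beta{}^\alpha$, $\Phi^\alpha$, $\Psi$ to solve successively for $D_{\alpha\bar\beta}$, $E^\alpha$, $B$---so there is nothing to contrast.

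Your description of the first stage is accurate and complete; the only caveat worth flagging is that the second stage, which you correctly identify as the main obstacle, is genuinely laborious: one must write out each curvature $2$-form explicitly in the coframe, track all the cross-terms generated by substituting $\phi_\beta{}^\alpha=\omega_\beta{}^\alpha+D_\beta{}^\alpha\theta$ etc.\ into the right-hand sides of \eqref{CMstructure}, and then invoke the precise trace conditions from the appendix of \cite{CM74}. This is not conceptually difficult but is where errors in the numerical constants tend to creep in, so if you intend to write it out in full you should keep the Chern--Moser normalizations in front of you rather than reconstruct them.
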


If $(\theta,\theta^A)$ is an admissible coframe on $\hat M$ near $\hat p:=f(p)$ relative to the local embedding $f\colon M\to\hat M$ and the coframe $(\theta,\theta^\alpha)$, then we can also pull back the corresponding Chern-Moser connection forms
\[ (\hat \omega, \hat \omega^{A}, , \hat \omega^{\bar{B}}, \hat\phi, \hat \phi_{B}{}^{A}, \hat \phi_{\bar B}{}^{\bar A}, \hat \phi^{A}, \hat \phi^{\bar{A}}, \hat \psi) \]
to $\hat M$ and then pull these forms back to $M$ using $f$. The second fundamental form of $f$ is controlled by the CR Gauss Equation (see equation (5.8) in \cite{EHZ04}), which in the case where $\hat M$ is the sphere $\bS^N$ (so that the CR curvature of the target vanishes) takes the form
\begin{equation}
\begin{aligned}
S_{\alpha \bar{\beta} \mu \bar{\nu}} = & - g_{a\bar{b}}\omega_{\alpha \: \: \: \mu}^{\:\:\:a}\omega_{\bar{\beta} \: \: \: \bar{\nu}}^{\:\:\:\bar{b}}+\frac{1}{n+2}(\omega_{\gamma \: \: \: \alpha}^{\:\:\:a}\omega_{\: \: \: a \bar{\beta}}^{\gamma}g_{\mu \bar{\nu}}+\omega_{\gamma \: \: \: \mu}^{\:\:\:a}\omega_{\: \: \: a \bar{\beta}}^{\gamma}g_{\alpha \bar{\nu}}   + \omega_{\gamma \: \: \: \alpha}^{\:\:\:a}\omega_{\: \: \: a \bar{\nu}}^{\gamma}g_{\mu \bar{\beta}}\\ &+ \omega_{\gamma \: \: \: \mu}^{\:\:\:a}\omega_{\: \: \: a \bar{\nu}}^{\gamma}g_{\alpha \bar{\beta}} )- \frac{\omega_{\gamma \: \: \: \delta}^{\:\:\:a}\omega_{\:\:\:a}^{\gamma \: \: \: \delta}}{(n+1)(n+2)}(g_{\alpha \bar{\beta}}g_{\mu \bar{\nu}}+g_{\alpha \bar{\nu}}g_{\mu \bar{\beta}}).
\end{aligned}
\end{equation}
The main focus in this paper is to describe the geometry of the mapping $f$ in terms of a {\it given} second fundamental form, and therefore the Gauss Equation will not play a role in what follows.

\section{Partial Rigidity for degenerate mappings into spheres} \Label{PRdegsect}

We shall now enter the proof of Theorem \ref{Main0}. We will use the setup and notation introduced in the previous section. We begin by establishing some identities that will be used in the proof.

\subsection{Some identities of "Codazzi type"} It follows from (3.8) in \cite{EHZ04} that along $M$ we have
\begin{multline}\Label{e:100}
d{\o_\a}^a-{\o_\a}^\gamma\wedge{\omega_\gamma}^a-{\o_\a}^b\wedge{\omega_b}^a=\\
{\hat R}_{\a}{}^a{}_{\mu\bar\nu}
\theta^\mu\wedge\theta^{\bar\nu}
+\hat W_\a{}^a{}_\mu\theta^\mu\wedge\theta
- \hat W^a{}_{\a\bar\nu}\theta^{\bar \nu}\wedge \theta,
\end{multline}
where we have used the fact (see \eqref{EHZ4.4}) that $\tau^a=\theta^a=0$ on $M$. Since ${\omega_\alpha}^a={{\omega_\alpha}^a}_\beta\theta^\beta$, we also obtain (via \eqref{psh2}),
$$
d{\o_\a}^a=d{{\omega_\alpha}^a}_\beta\wedge \theta^\beta+{{\omega_\alpha}^a}_\beta(\theta^\mu\wedge{\omega_\mu}^\beta+
\theta\wedge\tau^\beta)
$$
and, hence,
$$
d{\o_\a}^a-{\o_\a}^\gamma\wedge{\omega_\gamma}^a-{\o_\a}^b\wedge{\omega_b}^a=
(d{{\omega_\alpha}^a}_\beta-\omega_\gamma{}^a{}_{\beta}{\o_\alpha}^\gamma-
{{\omega_\a}^a}_\gamma{\o_\beta}^\gamma + {{\omega_\alpha}^b}_\beta{\o_b}^a)\wedge \theta^\beta-{{\omega_\alpha}^a}_\beta\tau^\beta\wedge\theta.
$$
We conclude that \eqref{e:100} can be rewritten as
\begin{equation}\Label{e:200}
(\nabla \o_\a{}^a{}_\beta+{\hat R}_{\a}{}^a{}_{\beta\bar\nu}
\theta^{\bar\nu})\wedge \theta^\beta+\\
(\hat W_\a{}^a{}_\mu\theta^\mu
- \hat W^a{}_{\a\bar\nu}\theta^{\bar \nu}-{{\omega_\alpha}^a}_\beta\tau^\beta)\wedge \theta=0.
\end{equation}
It follows that the covariant derivatives $\omega_\a{}^a{}_{\b;\gamma}$ are symmetric in $\alpha,\beta,\gamma$ and the following identities hold:
\begin{equation}\Label{e:300}
\begin{aligned}
\o_\a{}^a{}_{\beta;\bar\nu}+{\hat R}_{\a}{}^a{}_{\beta\bar\nu} &=0\\
\o_\a{}^a{}_{\beta;0}-\hat W_\a{}^a{}_\beta &=0\\
\o_\a{}^a{}_{\beta}A^\beta{}_{\bar\nu}+\hat W^a{}_{\a\bar\nu} &=0.
\end{aligned}
\end{equation}
Moreover, since the target is the sphere with $\hat S_{A\bar B C\bar D}=0$, we also conclude, by \eqref{CRpshcurv}, that
\begin{equation}
\hat R_\a{}^a{}_{\b \bar\nu}=\frac{1}{N+2}(\hat R_\a{}^ag_{\b\bar\nu}+\hat R_\b{}^ag_{\a\bar\nu}),
\end{equation}
where $\hat R_{A\bar B}$ denotes the Ricci curvature tensor.
By Proposition \ref{EHZProp3.1}, we obtain
\begin{equation}
\hat R_\a{}^a{}_{\b\bar\nu}=-i(\hat D_\a{}^ag_{\b\bar\nu}+\hat D_\b{}^a g_{\a\bar\nu}),
\end{equation}
and, hence, the first equation in \eqref{e:300} yields
\begin{equation}\Label{barder}
\o_\a{}^a{}_{\beta;\bar\nu}=i(\hat D_\a{}^ag_{\b\bar\nu}+\hat D_\b{}^a g_{\a\bar\nu}).
\end{equation}

\subsection{Proof of Theorem $\ref{Main0}$} We first note that to prove the theorem it suffices to show that $f(U)$, where $U\subset M$ is as described in the theorem, is contained in a complex plane of dimension $n+d+k+1$ by unique continuation along the connected, minimal hypersurface $M$. (The CR mapping $f$ extends as a holomorphic mapping in a connected open set with $M$ in its boundary.) In what follows, we shall assume that $d\geq 1$.  (As is well known, $d=0$ can only happen when $M$ is locally spherical and $f$ is totally geodesic, i.e.\ contained in a plane of dimension $n+1$.) By making an initial unitary transformation of the normal vector fields $L_a$ we may assume, without loss of generality, that
\begin{equation}\Label{k0deg}
\span \{\o_{\gamma_1}{}^\#{}_{\gamma_2;\gamma_3,\ldots,\gamma_{l}} L_\#,
\; 2\le l\le l_0 \}= \span \{L_\#\},
\quad
\omega_{\gamma_1}{}^j{}_{\gamma_2;\gamma_3\ldots\gamma_l}\equiv 0,
\quad l\ge 2,
\end{equation}
where $\#,*$ etc run over the indices $n+1,\ldots, n+d$, and $i,j$ over the remaining indices in the codimensional range $n+d+1,\ldots, N$ (unless otherwise specified). Recall that each $E_l(p)$, $2\leq l\leq l_0$, has locally constant dimension $d_l$ near $p_0$ and
$$
0<d_2<d_3<\ldots<d_{l_0}=d.
$$
By applying Gram-Schmidt to the $L_\#$, we may further assume that, for each $2\leq l\leq l_0$,
\begin{equation}\Label{strongnorm}
\omega_{\gamma_1}{}^\#{}_{\gamma_2;\gamma_3\ldots\gamma_l}\equiv 0, \quad \#\geq n+d_l+1.
\end{equation}

We take as our starting point the following identities (see \cite{EHZ04}, (9.4-9.6))
\begin{equation}\Label{e:400}
\omega_\#{}^j{}_\mu=0,
\end{equation}
\begin{equation}\Label{eq-phiia}
\hat\phi_\a{}^j=\hat D_\a{}^j\theta,
\quad \hat\phi^j=\hat D_\mu{}^j\theta^\mu+\hat E^j\theta,
\end{equation} and
\begin{equation}\Label{eq-phiia2}
\hat\phi_\a{}^\#=\omega_\a{}^\#{}_\mu\theta^\mu
+\hat D_\a{}^\#\theta,
\quad \hat\phi^\#=\hat D_\mu{}^\#\theta^\mu
+\hat E^\#\theta,
\end{equation}
which follow exactly as in the beginning of the proof of Theorem 2.2 in Section 9 of \cite{EHZ04}.
Differentiating $\hat\phi_\a{}^j$  we obtain
\begin{equation}\Label{e:450}
d\hat\phi_\a{}^j= d\hat D_\a{}^j\wedge\theta+ig_{\mu\bar\nu}\hat D_{\a}{}^j\theta^\mu\wedge\theta^{\bar\nu}
\end{equation}
and by the structure equations on $\bS=\bS^N$,
\begin{equation}\Label{e:500}
\begin{aligned}
d\hat\phi_\a{}^j=&
\hat\phi_\a{}^\mu\wedge \hat\phi_\mu{}^j+\hat\phi_\a{}^a\wedge\hat\phi_a{}^j+i\theta_\a\wedge\hat\phi^j\\=&
\hat D_\mu{}^j \omega_\a{}^\mu{}\wedge\theta
+\omega_\a{}^\#{}_\beta\omega_\#{}^j{}_{\bar\nu}\theta^\beta\wedge\theta^{\bar\nu}+
\omega_\a{}^\#{}_\beta(\hat D_\#{}^j+\omega_\#{}^j{}_{0}) \theta^\b\wedge \theta\\&
-\hat D_\a{}^a\omega_a{}^j\wedge\theta
-ig_{\a\bar\nu}\hat D_\beta{}^j\theta^\beta\wedge \theta^{\bar\nu}+ig_{\a\bar\nu}\hat E^j\theta^{\bar\nu}\wedge\theta.
\end{aligned}
\end{equation}
We have used here, and will do so throughout this paper, the fact that the CR curvature terms on $\bS$ vanish. By identifying terms and using the definition of covariant derivatives, we obtain the following three identities from \eqref{e:450} and \eqref{e:500}
\begin{equation}\Label{e:600}
\begin{aligned}
\omega_\a{}^\#{}_\beta\omega_\#{}^j{}_{\bar\nu}& =i(g_{\b\bar\nu}\hat D_\a{}^j+g_{\a\bar\nu}\hat D_\b{}^j)\\
\hat D_\a{}^j{}_{;\,\bar\nu}& =ig_{\a\bar\nu}\hat E^j\\
\hat D_\a{}^j{}_{;\,\beta}& =\omega_\a{}^\#{}_\beta(\hat D_\#{}^j+\omega_\#{}^j{}_{0})
\end{aligned}
\end{equation}
Next, we note from Proposition \ref{EHZProp3.1} and \eqref{e:400} that
\begin{equation}\Label{e:610}
\hat\phi_\#{}^{j}=\o_\#{}^j{}_{\bar\nu}\theta^{\bar\nu}+(\hat D_\#{}^{j}+\o_\#{}^{j}{}_0)\theta,
\end{equation}
which implies (via \eqref{psh2}) that
\begin{equation}\Label{e:620}
d\hat\phi_\#{}^j=d\o_\#{}^j{}_{\bar\nu}\wedge \theta^{\bar\nu}-\o_\#{}^j{}_{\bar\gamma}\o_{\bar\nu}{}^{\bar\gamma}\wedge \theta^{\bar\nu}+ig_{\mu\bar\nu}(\hat D_\#{}^{j}+\o_\#{}^{j}{}_0)\theta^\mu\wedge\theta^{\bar\nu}\mod\theta
\end{equation}
The structure equation for $\hat\phi_\#{}^{j}$ reduces to (using the fact that $\theta_\#=0$ and $\phi_\mu{}^{j'}=0$ on $M$),
\begin{equation}
\begin{aligned}
d\hat\phi_\#{}^{j}=&
\hat\phi_\#{}^a\wedge\hat\phi_a{}^{j}\\ =&\hat\phi_\#{}^{*}\wedge\hat\phi_{*}{}^{j} + \hat\phi_{\#}{}^{i}\wedge\hat\phi_{i}{}^{j}\\ =&\o_\#{}^{*}\wedge\o_{*}{}^{j} + \o_{\#}{}^{i}\wedge\o_{i}{}^{j}\mod\theta,
\end{aligned}
\end{equation}
where the last identity follows from  the identities in Proposition \ref{EHZProp3.1}. By using \eqref{e:400} again, we notice that
\begin{equation}\Label{e:630}
d\hat\phi_\#{}^{j}=\o_{*}{}^{j}{}_{\bar\nu}\o_\#{}^{*}\wedge\theta^{\bar \nu} - \o_{\#}{}^{i}{}_{\bar\nu}\o_{i}{}^{j}\wedge\theta^{\bar\nu}\mod\theta.
\end{equation}
Now, it follows from \eqref{e:400} again that
$$
\o_{*}{}^{j}{}_{\bar\nu}\o_\#{}^{*}{}_\mu=\o_{a}{}^{j}{}_{\bar\nu}\o_\#{}^{a}{}_\mu,\quad
\o_{\#}{}^{i}{}_{\bar\nu}\o_{i}{}^{j}{}_\mu=\o_{\#}{}^{a}{}_{\bar\nu}\o_{a}{}^{j}{}_\mu,
$$
and, hence, by identifying the coefficient in front of $\theta^\mu\wedge\theta^{\bar\nu}$ in the two identities \eqref{e:620} and \eqref{e:630} we deduce that
\begin{equation}\Label{e:640}
\o_\#{}^j{}_{\bar\nu;\mu}+ig_{\mu\bar\nu}(\hat D_\#{}^{j}+\o_\#{}^{j}{}_0)=0.
\end{equation}
We now observe that covariant differentiation of $\o_{\gamma_1}{}^\#{}_{\gamma_2;\gamma_3\ldots\gamma_l}\o_\#{}^j{}_{\bar\nu}$ with respect to $\theta^\mu$ will only involve the components of the tensor $(\o_{\a_1}{}^\#{}_{\a_2;\a_3\ldots\a_l}\o_\#{}^a{}_{\bar\nu})$ with $a\in \{n+d+1,\ldots,N\}$ in view of the identity \eqref{e:400}. Thus, by covariantly differentiating the first identity in \eqref{e:600} with respect to $\theta^{\mu}$, we obtain
\begin{equation}\Label{e:635}
\begin{aligned}
\omega_\a{}^\#{}_{\beta;\mu}\omega_\#{}^j{}_{\bar\nu}& = -\omega_\a{}^\#{}_{\beta}\omega_\#{}^j{}_{\bar\nu;\mu}+i(g_{\b\bar\nu}\hat D_\a{}^j{}_{;\mu}+g_{\a\bar\nu}\hat D_\b{}^j{}_{;\mu})\\
&=i(g_{\b\bar\nu}\hat D_\a{}^j{}_{;\mu}+g_{\a\bar\nu}\hat D_\b{}^j{}_{;\mu})+
ig_{\mu\bar\nu}\omega_\a{}^\#{}_{\beta}(\hat D_\#{}^{j}+\o_\#{}^{j}{}_0)\\
& =i (g_{\a\bar\nu}\omega_\b{}^\#{}_{\mu}+g_{\b\bar\nu}\omega_\mu{}^\#{}_{\a}
+g_{\mu\bar\nu}\omega_\a{}^\#{}_{\beta})
(\hat D_\#{}^{j}+\o_\#{}^{j}{}_0),
\end{aligned}
\end{equation}
where the last identity follows from the third identity in \eqref{e:600}. For reference, we also note that this can be written as
\begin{equation}\Label{omega3ref}
\omega_\a{}^\#{}_{\beta;\mu}\omega_\#{}^j{}_{\bar\nu} = i (g_{\a\bar\nu}\hat D_\b{}^j{}_{;\mu}+g_{\b\bar\nu}\hat D_\mu{}^j{}_{;\a}
+g_{\mu\bar\nu}\hat D_\a{}^j{}_{;\b}),
\end{equation}
A simple induction (using also the fact that covariant derivativates of $\o_\a{}^a{}_\b$ in the $\theta^\gamma$ directions are symmetric in their indices; cf.\ \eqref{e:200}) shows that, for all $3\leq l$, we have
\begin{equation}\Label{e:636}
\omega_{\gamma_1}{}^\#{}_{\gamma_2;\gamma_3\ldots\gamma_l}\omega_\#{}^j{}_{\bar\nu}=
i\{g_{\gamma_1\bar\nu}\omega_{\gamma_2}{}^\#{}_{\gamma_3;\gamma_4\ldots\gamma_l}\}
(\hat D_\#{}^{j}+\o_\#{}^{j}{}_0)+i\sum_{t=3}^{l-1} C_t,
\end{equation}
where $\{\cdot\}$ denotes the sum of all cyclic permutations in $\gamma_1,\ldots,\gamma_l$ and each $C_t$ is a sum of terms of the form
$$
g_{\mu_1\bar\nu}\omega_{\mu_2}{}^\#{}_{\mu_3;\mu_4\ldots\mu_t}
(\hat D_\#{}^{j}+\o_\#{}^{j}{}_0)_{;\mu_{t+1}\ldots\mu_l},
$$
where $\mu_1,\ldots,\mu_l$ is a permutation of $\gamma_1,\ldots,
\gamma_l$ such that $\mu_{t+1},\ldots,\mu_{l}$ are chosen from $\gamma_4,\ldots,\gamma_l$; also, in \eqref{e:636} the last sum is understood to be vacuous if $l=3$.

Let $z=(z^1,\ldots,z^n)\in \bC^n$ and, for each $2\leq l$, denote by $\Omega_{(l)}^\#(z)$ the homogeneous polynomial of degree $l$ obtained by multiplying $\omega_{\gamma_1}{}^\#{}_{\gamma_2;\gamma_3\ldots\gamma_l}$ by the monomials $z^{\gamma_1}\cdot\ldots\cdot z^{\gamma_l}$ (and summing according to the summation convention), i.e.\
\begin{equation}
\Omega_{(l)}^\#(z):=\omega_{\gamma_1}{}^\#{}_{\gamma_2;\gamma_3\ldots\gamma_l}
z^{\gamma_1}\ldots z^{\gamma_l},
\end{equation}
and define $\Omega^\#(z)$ to be the degree $l_0$ polynomial
\begin{equation}\Label{omega}
\Omega^\#(z):=\sum_{l=2}^{l_0}\Omega_{(l)}^\#(z),
\end{equation}
where $l_0$ is the integer in \eqref{k0deg}. We note that the $d=d_{l_0}$ polynomials $\Omega^\#(z)$, as $\#$ varies over its index set $\{n+1,\ldots, n+d\}$, are linearly independent in the space of polynomials $\bC[z]$ and, hence, spans a $d$-dimensional subspace of $\bC[z]$. (This follows from the fact that the rank of a matrix and that of its transpose are equal). Moreover, if we truncate the polynomials $\Omega^\#(z)$ at degree $l<l_0$, i.e.\ consider the polynomial
\begin{equation}\Label{lomega}
{}_{l}\Omega^\#(z):=\sum_{t=2}^{l}\Omega_{(t)}^\#(z),
\end{equation}
then we obtain $d_{l}$ linearly independent polynomials ${}_{l}\Omega^\#(z)$, for $\#=n+1,\ldots,n+d_{l}$, and ${}_{l}\Omega^\#(z)\equiv 0$, for $\#=n+d_{l}+1,\ldots,n+d$.

If we now multiply the first identity in \eqref{e:300} by $z^\alpha z^\beta\bar{z^\nu}$ (and sum according to the summation convention), then we obtain the polynomial identity
\begin{equation}\Label{polyeq:1}
\Omega_{(2)}^\#(z)\, \omega_\#{}^j\left(\bar z\right)=2i\hat D^j(z)\nl z\nr^2,
\end{equation}
where $\omega_\#{}^j(\bar z)$ and $\hat D^j(z)$ are the linear polynomials given by
\begin{equation}\Label{e:700}
\omega_\#{}^j(z):=\omega_\#{}^j{}_{\bar\nu} \bar z^\nu,\quad \hat D^j(z)=\hat D_\a{}^jz^\a=\hat D_\b{}^jz^\b.
\end{equation}
Similarly, multiplying \eqref{e:636} by $z^{\gamma_1}\ldots z^{\gamma_l}\bar z^\nu$ (and summing), we obtain
\begin{multline}\Label{polyeq:2}
\Omega_{(l)}^\#(z)\, \omega_\#{}^j\left(\bar z\right)=\\i\left(l\Omega_{(l-1)}^\#(z)(\hat D_\#{}^j+\o_\#{}^j{}_0)+\sum_{t=3}^{l-1}c_{lt}\Omega_{(t-1)}^\#(z)
(T_{(l-t)})_\#{}^j(z)
\right)\nl z\nl^2,
\end{multline}
where
$$
(T_{(l-t)})_\#{}^j(z):=(\hat D_\#{}^{j}+\o_\#{}^{j}{}_0)_{;\gamma_{t+1}\ldots\gamma_l}z^{\gamma_{t+1}}\ldots z^{\gamma_l}
$$
and the $c_{lt}$ are combinatorial integers.
We shall need the following lemma to analyze the equations \eqref{polyeq:2}:

\begin{Lem}\Label{newlemma1}
Let $z=(z^1,\ldots,z^n)$ be coordinates in $\bC^n$ with $n\geq 2$, and $p_1(z),\ldots,p_m(z)$ homogeneous polynomials of degree $d$ in $z$, linearly independent in the complex vector space $\bC[z]$. Let $S$ be the vector space of homogeneous polynomials, $r(z)$, of degree $d-1$ in $z$ such that there are linear polynomials $q_1(\bar z),\ldots,q_m(\bar z)$ in $\bar z$ (depending on $r(z)$) satisfying
\begin{equation}\Label{polyeqdeg2}
\sum_{j=1}^mp_j(z)q_j(\bar z)=r(z)\nl z\nr^2.
\end{equation}
If, for some $0\leq k\leq n-1$, we have
$$
m<\sum_{j=0}^k(n-j),
$$
then
$$
\dim S\leq k.
$$
Moreover, the polynomial $r(z)\equiv 0$ in \eqref {polyeqdeg2} if and only if the $\bC^m$-valued linear polynomial $q(z)=(q_1(z),\ldots, q_m(z))\equiv 0$. Consequently, the space of $q(z)$ such that there exists an $r(z)$ satisfying \eqref {polyeqdeg2} also has dimension $\dim S$.
\end{Lem}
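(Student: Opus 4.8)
The plan is to study the bilinear pairing underlying \eqref{polyeqdeg2} via the \emph{base locus} of the polynomials $p_1,\ldots,p_m$. For a fixed homogeneous $r(z)$ of degree $d-1$, suppose $q_1(\bar z),\ldots,q_m(\bar z)$ are linear forms with $\sum_j p_j(z)q_j(\bar z)=r(z)\|z\|^2$. I would first record the elementary observation: if $q_j(\bar z)=\sum_\nu a_{j\nu}\bar z^\nu$, then the right-hand side, written out, is $\sum_\nu (\sum_j a_{j\nu}p_j(z))\bar z^\nu$, so comparing coefficients of each $\bar z^\nu$ gives $\sum_j a_{j\nu}p_j(z)=r(z)z^\nu$ for every $\nu=1,\ldots,n$. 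Thus each of the $n$ polynomials $r(z)z^\nu$ lies in the span $P:=\span_\bC\{p_1,\ldots,p_m\}$, which is $m$-dimensional. Conversely, if $r(z)z^\nu\in P$ for all $\nu$, one recovers the coefficients $a_{j\nu}$ (using linear independence of the $p_j$) and hence the $q_j$. So $S=\{r\in \bC[z]_{d-1}: r(z)z^\nu\in P \text{ for all } \nu\}$, and the ``moreover'' statement is immediate: $r\equiv 0$ forces all coefficients $\sum_j a_{j\nu}p_j=0$, hence all $a_{j\nu}=0$ by independence, hence $q\equiv 0$; and the converse is trivial. The last sentence of the lemma then follows because $q$ is determined by $r$ and, for each $r\in S$, such a $q$ exists — so $r\mapsto q$ is a linear isomorphism from $S$ onto the space of admissible $q$.

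The substance is therefore the bound $\dim S\le k$. Consider the linear map $\Phi\colon S\otimes \bC^n\to \bC[z]_d$, $(r,\nu)\mapsto r(z)z^\nu$; its image lies in $P$, which has dimension $m$. I would bound the dimension of this image from below in terms of $\dim S=:\s$. The key point is that multiplication by the ideal $(z^1,\ldots,z^n)$ is ``efficient'': if $V\subset \bC[z]_{d-1}$ is any subspace of dimension $\s$, then $\dim\big(\sum_\nu z^\nu V\big)\ge \sum_{j=0}^{?}(\ldots)$ — more precisely I expect the sharp bound to be the one forcing $m\ge \sum_{j=0}^{\s}(n-j)$, which is exactly the negation of the hypothesis $m<\sum_{j=0}^k(n-j)$ when $\s>k$. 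Concretely: choose a monomial order and look at leading monomials; the leading monomials of a $\s$-dimensional space span a $\s$-dimensional space of monomials $\scrM$, and the leading monomials of $\sum_\nu z^\nu V$ contain the ``expansion'' $\bigcup_\nu z^\nu\scrM$. An estimate of the growth $|\bigcup_\nu z^\nu \scrM|$ in terms of $|\scrM|=\s$ — an instance of the Clements–Lindström / Macaulay-type combinatorics, or of the fact that a set of $\s$ monomials has its ``upper shadow'' of size at least $\sum_{j=0}^{\s}(n-j)$ when $\s\le n$, with the extremal configuration being $\{z^1,\ldots,z^\s\}\cdot z^{d-2}$ — yields $m\ge\dim\big(\sum_\nu z^\nu V\big)\ge \sum_{j=0}^{\s}(n-j)$. (I would double-check the exact combinatorial statement and the indexing; an alternative, perhaps cleaner, route is to induct on $\s$ by splitting off a single $z^\nu$-factor, or to use the Hilbert function of the ideal generated by $V$.) Combined with the hypothesis, $\sum_{j=0}^{\s}(n-j)\le m<\sum_{j=0}^k(n-j)$ forces $\s\le k$, i.e.\ $\dim S\le k$.

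The main obstacle I anticipate is pinning down the precise combinatorial/commutative-algebra inequality controlling $\dim(\sum_\nu z^\nu V)$ from below by a function of $\dim V$ that is sharp enough to read off ``$m\ge\sum_{j=0}^{\s}(n-j)$'' — the naive bound $\dim(\sum_\nu z^\nu V)\ge \dim V+ (n-1)$ (from adding generators one variable at a time) is too weak, and one genuinely needs the quadratic growth coming from the fact that the $n$ translates $z^\nu V$ cannot overlap too much. I would handle this either by the monomial-ideal / leading-term reduction above (reducing to a purely combinatorial shadow inequality for down-sets of monomials, which is classical), or by a direct induction on $n$ peeling off the variable $z^n$: write $r=z^n r' + r''$ with $r''\in\bC[z^1,\ldots,z^{n-1}]$, analyze separately the ``$z^n$-free part'' and the rest, and apply the inductive hypothesis in $n-1$ variables. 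Everything else — the reduction of \eqref{polyeqdeg2} to the membership conditions $r z^\nu\in P$, and the ``moreover'' and ``consequently'' clauses — is the routine linear algebra sketched in the first paragraph.
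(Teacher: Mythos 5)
Your reduction of \eqref{polyeqdeg2} to the membership conditions $r(z)z^\nu\in P$ for $\nu=1,\ldots,n$ is exactly the paper's starting point (there written in matrix form as $p(z)Q=r(z)z$), and your treatment of the ``moreover'' and ``consequently'' clauses is correct and complete. Where you diverge is in the main estimate, and here there are two problems. First, the inequality you aim for, $m\geq\sum_{j=0}^{\sigma}(n-j)$ with $\sigma=\dim S$, is false as stated: for $\sigma=1$ the space $\sum_\nu z^\nu V$ has dimension exactly $n$ (multiplication by a fixed nonzero $r$ is injective), not $2n-1$, and indeed your own extremal configuration $\{z^1,\ldots,z^\sigma\}\cdot (z^n)^{d-2}$ has upper shadow of size $\sigma n-\binom{\sigma}{2}=\sum_{j=0}^{\sigma-1}(n-j)$. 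The correct target is $\dim\bigl(\sum_\nu z^\nu V\bigr)\geq\sum_{j=0}^{\sigma-1}(n-j)$ for any subspace $V\subseteq\bC[z]_{d-1}$ with $\dim V=\sigma\leq n$; applying this to a $(k+1)$-dimensional subspace of $S$ (note $k+1\leq n$) gives $m\geq\sum_{j=0}^{k}(n-j)$, the desired contradiction. Second, and more seriously, this inequality carries the entire weight of the lemma and you do not prove it. The leading-monomial step is fine as far as it goes --- it reduces the lower bound to the minimal size of the upper shadow of a set of $\sigma$ monomials of degree $d-1$ --- but that combinatorial minimum is itself a nontrivial theorem (Macaulay's lower bound for the growth of ideals, equivalently the statement that lex segments minimize $\dim R_1V$, in the Clements--Lindstr\"om circle of ideas), and it must be either correctly cited or proved. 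As written, the heart of the argument is a placeholder flagged by your own caveat.

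For comparison, the paper proves the bound by a self-contained induction on the number of linearly independent solution pairs $(Q,r)$: it shows that the common kernels $V_k=\ker Q^{(1)}\cap\ldots\cap\ker Q^{(k)}$ satisfy $\dim V_k-\dim V_{k+1}\geq n-k$, using that $z\mapsto r^{(k+1)}(z)z$ maps a component of $\{r^{(1)}=\ldots=r^{(k)}=0\}$ onto a Zariski-open subset of itself, and then telescopes from $\dim V_1=m-n$. The delicate point there is that $r^{(k+1)}$ may vanish identically on that variety without lying in the ideal generated by $r^{(1)},\ldots,r^{(k)}$, which the paper handles with primary decomposition and Noetherian operators. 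Your route, if the Macaulay-type growth bound is stated with the correct index and properly justified, would bypass that subtlety entirely and is arguably cleaner; but in its present form it has a genuine gap (plus the off-by-one) at precisely the step that matters.
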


\begin{Rem} {\rm The estimate for $\dim S$ in this lemma is sharp as is illustrated by the example in Section \ref{Exlem1} below.
}
\end{Rem}

Before proving Lemma \ref{newlemma1}, we shall make some preliminary reductions and observations. Let $p(z):=(p_1(z),\ldots,p_m(z))$ be as in Lemma \ref{newlemma1} and assume that  $q(\bar z):=(q_1(\bar z),\ldots,q_m(\bar z))$, $r(z)$ solve \eqref{polyeqdeg2}. We can write $q(\bar z)=Q\bar z^t$, where $Q$ is a constant $m\times n$-matrix (the space of such matrices will be denoted by $\bC^{m\times n}$) and the superscript $t$ denotes the transpose of a matrix, and express the identity \eqref{polyeqdeg2} in matrix form as follows
\begin{equation}
p(z)Q\bar z^t= r(z)z\bar z^t,
\end{equation}
which is equivalent to
\begin{equation}\Label{pQr}
p(z)Q=r(z)z.
\end{equation}
We note that $r(z)\equiv 0$ if and only if $Q=0$, since the polynomials $p_j(z)$ are linearly independent.  This proves the last statement in Lemma \ref{newlemma1}. Moreover, if $r\not\equiv 0$, then the matrix $Q$ must have rank $n$ since the mapping $z\mapsto r(z)z$ is clearly not contained in any proper subspace of $\bC^n$ (simply note that this mapping restricted to a complex line $L$ through 0 maps $L$ onto itself unless $L$ is contained in the zero locus of $r(z)$), and, hence, $m\geq n$. This proves the conclusion of Lemma \ref{newlemma1} for $k=0$; we also note that this is direct consequence of Lemma 3.2 in \cite{Huang99}.

For the proof of Lemma \ref{newlemma1}, we shall need the following preliminary result. We shall identify a matrix $Q\in \bC^{m\times n}$ with a linear mapping $Q\colon \bC^m\to \bC^n$ via $v\in \bC^m \mapsto vQ\in \bC^n$.

\begin{Lem}\Label{prenewlemma1} Let $p(z)=(p_1(z),\ldots,p_m(z))$ be as in Lemma $\ref{newlemma1}$. Assume that the pairs $\left(Q^{(1)},r^{(1)}(z)\right),\ldots, \left (Q^{(k)},r^{(k)}(z)\right)$ are linearly independent solutions to \eqref{pQr}, i.e.\ each pair $Q=Q_j\in \bC^{m\times n}$ and $r(z)=r^{(j)}(z)$ satisfies \eqref{pQr} and the collection $Q^{(1)},\ldots Q^{(k)}$ is linearly independent in $\bC^{m\times n}$ (or equivalently $r^{(1)}(z),\ldots, r^{(k)}(z)$ are linearly independent in the space of polynomials in $z$). Let
$$\kappa:=\dim \left (\ker Q^{(1)}\cap\ldots\cap \ker Q^{(k)}\right),
$$
and let (if $\kappa\geq 1$) $v_1,\ldots v_{\kappa}\in \bC^m$ be linearly independent vectors spanning $\ker Q^{(1)}\cap\ldots\cap \ker Q^{(k)}$. Then, there are linear $\bC^{m}$-valued polynomials $s^{(1)}(z),\ldots,s^{(k)}(z)$ and (if $\kappa\geq 1$) polynomials $h_1(z),\ldots, h_\kappa(z)$ such that
\begin{equation}\Label{prep}
p(z)=\sum_{j=1}^\kappa h_j(z)v_j+\sum_{i=1}^k r^{(i)}(z)s^{(i)}(z),
\end{equation}
where the first sum in \eqref{prep} is vacuous if $\kappa=0$.
\end{Lem}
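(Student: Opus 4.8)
The plan is to reduce Lemma~\ref{prenewlemma1} to one linear-algebra manipulation, by assembling $Q^{(1)},\dots,Q^{(k)}$ into a single map and using a left inverse of it on a complement of its kernel. First I would introduce the linear map $\mathbf{Q}\colon\bC^m\to(\bC^n)^k$ defined by $\mathbf{Q}(v):=(vQ^{(1)},\dots,vQ^{(k)})$; its kernel is exactly $K:=\ker Q^{(1)}\cap\dots\cap\ker Q^{(k)}$, which by hypothesis is spanned by $v_1,\dots,v_\kappa$. Fix a linear complement $U$ with $\bC^m=K\oplus U$, and let $\Pi\colon\bC^m\to K$ be the projection along $U$. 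Since $U\cap K=\{0\}$, the restriction $\mathbf{Q}|_U$ is injective, so there is a linear map $R\colon(\bC^n)^k\to\bC^m$ with values in $U$ such that $R\circ\mathbf{Q}$ is the identity on $U$ (take the inverse of $\mathbf{Q}|_U$ on $\ima\mathbf{Q}|_U$ and extend it arbitrarily to all of $(\bC^n)^k$). Decompose $R$ along the factors of $(\bC^n)^k$ as $R(w_1,\dots,w_k)=\sum_{i=1}^k R_i(w_i)$, with each $R_i\colon\bC^n\to\bC^m$ linear.

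The second step is to apply $\Pi$, $\mathbf{Q}$ and $R$ to the polynomial vector $p(z)$ coefficientwise, which is legitimate since these maps are linear and independent of $z$. Put $p_K(z):=\Pi(p(z))$ and $p_U(z):=p(z)-p_K(z)$. As $v_1,\dots,v_\kappa$ is a basis of $K$ and the entries of $p(z)$ are homogeneous of degree $d$, we may write $p_K(z)=\sum_{j=1}^\kappa h_j(z)v_j$ with $h_j$ homogeneous of degree $d$. Applying $\mathbf{Q}$ coefficientwise and using \eqref{pQr} gives $\mathbf{Q}(p(z))=(r^{(1)}(z)z,\dots,r^{(k)}(z)z)$; on the other hand $\mathbf{Q}(p(z))=\mathbf{Q}(p_U(z))$, because the coefficients of $p_K(z)$ lie in $K=\ker\mathbf{Q}$. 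Applying $R$, and using that $R\circ\mathbf{Q}=\mathrm{id}$ on $U$ together with the fact that the coefficients of $p_U(z)$ lie in $U$, we obtain $p_U(z)=R(r^{(1)}(z)z,\dots,r^{(k)}(z)z)=\sum_{i=1}^k R_i(r^{(i)}(z)z)=\sum_{i=1}^k r^{(i)}(z)\,R_i(z)$, the scalar $r^{(i)}(z)$ being pulled out of the linear map $R_i$. Since $R_i$ is linear, $s^{(i)}(z):=R_i(z)$ is a linear $\bC^m$-valued polynomial, and adding $p_K(z)$ back in yields precisely \eqref{prep}.

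Finally I would dispose of the degenerate cases and check the degree bookkeeping: if $\kappa=0$ then $U=\bC^m$ and the first sum in \eqref{prep} is vacuous, while if $k=0$ then $K=\bC^m$, $U=\{0\}$, $R=0$ and the second sum is vacuous; in general the entries of $p_K(z)$ are homogeneous of degree $d$ and $r^{(i)}(z)s^{(i)}(z)$ is homogeneous of degree $(d-1)+1=d$, so both sides of \eqref{prep} are homogeneous of degree $d$. I do not expect a genuine obstacle here; the only points that need care are keeping the row-versus-column conventions of \eqref{pQr} consistent throughout, and noting that the mere existence of the decomposition \eqref{prep} does not actually use the linear independence of the solutions $(Q^{(i)},r^{(i)})$ (that hypothesis enters only later, to bound the number of terms).
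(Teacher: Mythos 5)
Your proof is correct, but it takes a genuinely different route from the paper. The paper argues by induction on $k$: for $k=1$ it uses that $Q^{(1)}$ has full rank $n$ (hence a left inverse $S$) to write $p(z)-r^{(1)}(z)zS$ as a map into $\ker Q^{(1)}$, and at each inductive step it applies the new matrix $Q^{(k_0+1)}$ to the previous decomposition and inverts an auxiliary injective map $RQ^{(k_0+1)}$ on the complementary block $v_{\kappa_1+1},\ldots,v_{\kappa_0}$ to solve for the excess $h_j$'s. You instead bundle all the constraints into the single map $\mathbf{Q}(v)=(vQ^{(1)},\ldots,vQ^{(k)})$, split $\bC^m=K\oplus U$ with $K=\ker\mathbf{Q}$, and invert $\mathbf{Q}|_U$ in one shot; the identity $\mathbf{Q}(p(z))=(r^{(1)}(z)z,\ldots,r^{(k)}(z)z)$ from \eqref{pQr} then hands you the second sum in \eqref{prep} directly, with $s^{(i)}(z)=R_i(z)$ linear because the scalar $r^{(i)}(z)$ factors out of the linear map $R_i$. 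Your argument is shorter, avoids the induction entirely, and makes transparent two points that are less visible in the paper's proof: no rank hypothesis on the individual $Q^{(i)}$ is needed (injectivity of $\mathbf{Q}|_U$ is automatic from the choice of complement), and the linear independence of the pairs $(Q^{(i)},r^{(i)})$ plays no role in the decomposition itself --- it only matters later for the dimension count in Lemma \ref{newlemma1}. What the paper's inductive formulation buys is a basis $v_1,\ldots,v_{\kappa_k}$ adapted to the flag of kernels $V_{k}$, which is the form in which the lemma is actually invoked in the proof of Lemma \ref{newlemma1}; but your statement delivers the same conclusion for each $k$ separately, so it serves equally well there. The degree bookkeeping and the degenerate cases $\kappa=0$, $k=0$ are handled correctly.
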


\begin{proof} We shall prove Lemma \ref{prenewlemma1} by induction on $k$. Let us first assume that $k=1$. As noted above, if $Q:=Q^{(1)}\neq 0$ and $r(z):=r^{(1)}(z)\not\equiv 0$ solves \eqref{pQr}, then the rank of $Q$ equals $n$. Consequently, $Q$ has a left inverse $S\in \bC^{n\times m}$, i.e.\ $SQ$ equals the identity $n\times n$-matrix $I$. It follows that $$(p(z)-r(z)zS)Q=p(z)Q-r(z)zSQ=r(z)z-r(z)zI=0,$$ i.e.\ $p(z)-r(z)s(z)$, with $s(z):=zS$, takes all its values in $\ker Q$. This proves that $p(z)-r(z)s(z)=\sum_{j=1}^{\kappa} h_j(z) v_j$, where $\kappa:=\dim \ker Q=m-n$, $v_1,\ldots, v_{\kappa}\in \bC^m$ span $\ker Q$, and $h_1(z),\ldots, h_{\kappa}(z)$ are homogeneous polynomials of degree $d$, completing the proof of \eqref{prep} for $k=1$.

Next, assume that Lemma \ref{prenewlemma1} holds for all $k=1,\ldots, k_0$. Let $$\left(Q^{(1)},r^{(1)}(z)\right),\ldots, \left (Q^{(k_0+1)},r^{(k_0+1)}(z)\right)$$ be $k_0+1$ pairs of linearly independent solutions to \eqref{pQr}. By the induction hypothesis, we can express $p(z)$ in the form \eqref{prep} with $k=k_0$ and $\kappa=\kappa_0$, where $\kappa_0$ denotes the dimension of $V_0:=\ker Q^{(1)}\cap\ldots\cap \ker Q^{(k_0)}$; i.e.\
\begin{equation}\Label{prep0}
p(z)=\sum_{j=1}^{\kappa_0} h_j(z)v_j+\sum_{i=1}^{k_0} r^{(i)}(z)s^{(i)}(z),
\end{equation}
Now, let $\kappa_1\leq \kappa_0$ denote the dimension of $V_1:= \ker Q^{(1)}\cap\ldots\cap \ker Q^{(k_0+1)}$. After performing an invertible linear transformation of the $v_1,\ldots,v_{\kappa_0}$ if necessary, we may assume (without loss of generality) that $v_1,\ldots, v_{\kappa_1}$ span $V_1$. Equation \eqref{pQr} then reads
\begin{equation}\Label{e:705}
\begin{aligned}
r^{k_0+1}(z)z &=p(z)Q^{(k_0+1)}\\&=\left(\sum_{j=1}^{\kappa_1} h_j(z)v_j+\sum_{j=\kappa_1+1}^{\kappa_0} h_j(z)v_j+\sum_{i=1}^{k_0} r^{(i)}(z)s^{(i)}(z)\right ) Q^{(k_0+1)}\\
&= \left(\sum_{j=\kappa_1+1}^{\kappa_0} h_j(z)v_j+\sum_{i=1}^{k_0} r^{(i)}(z)s^{(i)}(z)\right)Q^{(k_0+1)},
\end{aligned}
\end{equation}
or, equivalently,
\begin{equation}\Label{e:706}
\begin{aligned}
\sum_{j=\kappa_1+1}^{\kappa_0} h_j(z)v_jQ^{(k_0+1)}=
r^{k_0+1}(z)z -\sum_{i=1}^{k_0} r^{(i)}(z)s^{(i)}(z)Q^{(k_0+1)}.
\end{aligned}
\end{equation}
By construction, the span of the vectors $v_{\kappa_1+1},\ldots,v_{\kappa_0}$ intersects $\ker Q^{(k_0+1)}$ only at the zero vector and, hence, if $R$ denotes the $(\kappa_0-\kappa_1)\times m$ matrix whose rows consist of $v_{\kappa_1+1},\ldots,v_{\kappa_0}$, then the linear mapping $RQ^{(k_0+1)}\colon \bC^{\kappa_0-\kappa_1}\to \bC^n$ is injective and therefore has a right inverse $T\in \bC^{n\times(\kappa_0-\kappa_1)}$. If we write $h(z):=(h_{\kappa_1+1}(z),\ldots,h_{\kappa_0})$, then we can write
$$
\sum_{j=\kappa_1+1}^{\kappa_0} h_j(z)v_j=h(z)R.
$$
Thus, if we multiply \eqref{e:706} from the right by $T$, then we obtain
\begin{equation}\Label{e:7065}
\begin{aligned}
h(z) & =h(z)RQ^{(k_0+1)}T=\sum_{j=\kappa_1+1}^{\kappa_0} h_j(z)v_jQ^{(k_0+1)}T &=\\
& = r^{k_0+1}(z)zT -\sum_{i=1}^{k_0} r^{(i)}(z)s^{(i)}(z)Q^{(k_0+1)}T.
\end{aligned}
\end{equation}
By substituting this expression into \eqref{prep0}, we conclude that \eqref{prep} also holds for $k=k_0+1$, which completes the induction and, hence, the proof of Lemma \ref{prenewlemma1}.
\end{proof}

\begin{proof}[Proof of Lemma $\ref{newlemma1}$]  To prove the conclusion of Lemma \ref{newlemma1}, it suffices to show that: {\it If there are $k+1\geq 1$ linearly independent pairs $\left(Q^{(1)},r^{(1)}(z)\right),\ldots, \left (Q^{(k+1)},r^{(k+1)}(z)\right)$ that solve \eqref{pQr}, then $m\geq \sum_{j=0}^{k}(n-j)$.} As mentioned above, this statement for $k=0$ is a direct consequence of Lemma 3.2 in \cite{Huang99}. We shall proceed by induction on $k$. Thus, let us fix $k_0\geq 1$ and assume that the statement holds for $k<k_0$. Suppose that there are $k_0+1$ linearly independent solutions $\left(Q^{(1)},r^{(1)}(z)\right),\ldots, \left (Q^{(k_0+1)},r^{(k_0+1)}(z)\right)$. For each $k\leq k_0+1$, let $\kappa_k$ be the dimension of the subspace $V_k:=\ker Q^{(1)}\cap\ldots\cap \ker Q^{(k)}$. Thus, we have $V_{k_0+1}\subset\ldots\subset V_1$ and $\kappa_{k_0+1}\leq\ldots\leq \kappa_1$. We can choose a basis $v_1,\ldots, v_{\kappa_1}$ for $V_1$ such that $v_1,\ldots, v_{\kappa_k}$ is a basis for $V_k$ (for every $k$ such that $\kappa_k\geq 1$). By Lemma \ref{prenewlemma1}, we can express $p(z)$, for each $k\leq k_0$, in the form
\begin{equation}\Label{e:708}
p(z)=\sum_{j=1}^{\kappa_k} h_j(z)v_j+\sum_{i=1}^{k} r^{(i)}(z)s_k^{(i)}(z),
\end{equation}
where the $s_k^{(i)}(z)$ depend on $k$ and the first sum is vacuous if $\kappa_k=0$.
Since
we also have $p(z)Q^{(k+1)}=r^{(k+1)}(z)z$, we conclude from \eqref{e:708} that
\begin{equation}\Label{e:7085}
\begin{aligned}
r^{(k+1)}(z)z &=\left(\sum_{j=1}^{\kappa_k} h_j(z)v_j+\sum_{i=1}^{k} r^{(i)}(z)s_k^{(i)}(z)\right)Q^{(k+1)}\\
&= \left(\sum_{j=\kappa_{k+1}+1}^{\kappa_k} h_j(z)v_j\right)Q^{(k+1)}+\sum_{i=1}^{k} r^{(i)}(z)\tilde s_k^{(i)}(z),
\end{aligned}
\end{equation}
where $\tilde s_k^{(i)}(z):=s_k^{(i)}(z)Q^{(k+1)}$.

Let $R_k\subset \bC^n$ denote the homogeneous algebraic variety, each component of which has dimension at least $n-k$, defined by
$$r^{(1)}(z)=\ldots =r^{(k)}(z)=0,$$
and suppose first that there is a component $C$ of $R_k$ such that the restriction $r(z)$ of $r^{(k+1)}(z)$ to $C$ does not vanish identically, i.e.\ $r:=r^{(k+1)}\big|_C\not \equiv 0$. From \eqref{e:7085}, we conclude that on $C$ we have
\begin{equation}\Label{e:7087}
\begin{aligned}
r(z)z = \left(\sum_{j=\kappa_{k+1}+1}^{\kappa_k} h_j(z)v_j\right)Q^{(k+1)}.
\end{aligned}
\end{equation}
Now, note that for each complex line $L\subset C$ through 0 such that $r\big|_L\not \equiv 0$, the mapping $z\to r(z)z$ sends $L$ onto itself. Hence, this map sends the homogeneous variety $C$, which has dimension at least $n-k$, onto a Zariski open subset of itself. The right hand side of \eqref{e:7087} maps into a subspace of dimension at most $\kappa_k-\kappa_{k+1}$ and, hence, we conclude
\begin{equation}\Label{e:709}
\kappa_k-\kappa_{k+1}\geq n-k.
\end{equation}
However, even though $r^{(1)}(z),\ldots, r^{(k+1)}(z)$ are linearly independent in $\bC[z]$, it could happen that $r^{(k+1)}(z)\equiv 0$ on $R_k$. The linear independence implies (by homogeneity) that $r^{(k+1)}$ does not belong to the ideal $I:=I(r^{(1)},\ldots,r^{(k)})$, whereas $r^{(k+1)}(z)\equiv 0$ on $R_k$, by the Nullstellen Satz, only implies that $r^{(k+1)}$ belongs to the radical $\sqrt{I}$. To deal with this complication, we shall need to use some facts from commutative algebra for which we refer the reader to \cite{Hormander90} (Chapter 7.7) and \cite{Sturmfels02} (Chapter 10). Let
$$
I=J_1\cap\ldots\cap J_t
$$
be the primary decomposition of the ideal $I:=I(r^{(1)},\ldots,r^{(k)})$. Since $r^{(k+1)}\not\in I$, there is a primary ideal $J=J_i$, for some $i\in \{1,\ldots, t\}$, such that $r^{(k+1)}\not\in J$. Let $\frak p$ denote the associated prime ideal, i.e.\ $\frak p=\sqrt{J}$, and $C_{\frak p}$ the irreducible, homogeneous zero locus of $\frak p$. (The situation above, where $r^{(k+1)}\not\equiv 0$ on a component $C=C_{\frak p}$ of $R_k$, corresponds to the one where we also have
$r^{(k+1)} \not\in \frak {p}$, which need not hold in general.) Let now $\frak N$ denote the space of Noetherian operators associated to the primary ideal $J$ (see e.g.\ Chapter 7.7 in \cite{Hormander90} or Chapter 10 in \cite{Sturmfels02}); i.e.\ $\frak N$ consists of the collection of partial differential operator $P=P(z,\partial)$ with polynomial coefficients (elements of the Weyl algebra),
$$
P(z, \partial):=\sum_{|\epsilon|\leq s} a_\epsilon(z)\partial^\epsilon, \quad a_\alpha\in \bC[z],\quad \epsilon\in \mathbb Z_+^n,\quad \partial^\epsilon:=\left(\frac{\partial}{\partial z_1}\right)^{\epsilon_1}\ldots \left(\frac{\partial}{\partial z_n}\right)^{\epsilon_n},
$$
such that
$$
(Pf)(z):=P(z,\partial)f(z)\equiv 0\,\, \text{{\rm on $C_\frak p$}}.
$$
The main result concerning $\frak N$ is the following characterization of the primary ideal $J$ (see Theorem 7.7.6 in \cite{Hormander90}):
$$f\in J\quad \iff (Pf)(z)\equiv 0\,\, \text{{\rm on $C_\frak p$}}, \, \forall P(z,\partial)\in \frak N.
$$
It is well known (and easy to see) that if $P\in \frak N$, then $[z_j,P]\in \frak N$ for $j=1,\ldots, n$, and, as a consequence, it follows that if $P\in \frak N$, then $P_{(\delta)}\in \frak N$ for all multi-indices $\delta\in \mathbb Z_+^n$, where $P_{(\delta)}=P_{(\delta)}(z,\partial)$ denotes the partial differential operator corresponding to the symbol
$$
P_{(\delta)}(z,\zeta):=\left(\frac{\partial}{\partial\zeta}\right)^\delta P(z,\zeta).
$$
Also, recall Leibnitz rule for differentiating a product,
\begin{equation}\Label{Leibnitz}
P(z,\partial)(uv)=\sum_{\delta\in \mathbb Z_+^n} \frac{1}{\delta!}(P_{(\delta)}(z,\partial) u)\frac{\partial^{|\delta|}v}{\partial z^\delta}.
\end{equation}
Now, since $r^{(k+1)}\not \in J$, there exist partial differential operators $P\in \frak N$ such that
\begin{equation}\Label{e:7010}
(Pr^{(k+1)})(z)\not\equiv 0\quad \text{{\rm on $C_{\frak p}$.}}
\end{equation}
Let us choose such a $P$ of minimal order (as a partial differential operator; i.e.\ with minimal $s$ where $s$ is the maximal order of a derivative appearing in $P=P(z,\partial)$). By applying $P$ to \eqref{e:7085} and using Leibnitz rule, we conclude that on $C_{\frak p}$ we have
\begin{equation}\Label{e:7011}
\begin{aligned}
(Pr^{(k+1)})(z)z = \left(\sum_{j=\kappa_{k+1}+1}^{\kappa_k} (Ph_j)(z)v_j\right)Q^{(k+1)},
\end{aligned}
\end{equation}
since $(P_{(\delta)}r^{(i)})(z) \equiv 0$ on $C_{\frak p}$ for $i\in \{1,\ldots k\}$ and all $\delta\in \mathbb Z_+^n$ (as $r^{(i)}\in J$ and $P_{(\delta)}\in \frak N$), and $(P_{(\delta)}r^{k+1})(z)\equiv 0$ on $C_{\frak p}$ for all $\delta\neq (0,\ldots, 0)$ (as $P\in \frak N$ was chosen to have minimal order). The same argument used to conclude \eqref{e:709} from \eqref{e:7087} above shows that \eqref{e:7011} implies \eqref{e:709} as well.

To conclude the proof of Lemma \ref{newlemma1}, recall that $\kappa_1=m-n$. Thus, by telescoping \eqref{e:709} we obtain
\begin{equation}
\begin{aligned}
m-n &=\kappa_1\geq (n-1)+\kappa_2\geq (n-1)+(n-2)+\kappa_3\geq\ldots\\
&\geq (n-1)+\ldots+(n-k_0)+\kappa_{k_0+1},
\end{aligned}
\end{equation}
which proves
$$
m\geq \sum_{j=0}^{k_0}(n-j),
$$
as desired. This completes the inductive step and, hence, the proof of Lemma \ref{newlemma1}.
\end{proof}

We shall now return to the equations \eqref{polyeq:2} for $l\geq 2$. We shall denote by $s_l^j(z)$ the homogeneous polynomial of degree $l-1$ appearing on the right in \eqref{polyeq:2} so that this equation reads
\begin{equation}\Label{polyeq:3}
\Omega^\#_{(l)}(z) \omega_\#{}^j(\bar z)=s_l^j(z)\nl z\nr^2;
\end{equation}
thus, we have, e.g., $s_2^j(z)=2i\hat D^j(z)$, where $\hat D^j(z)$ is defined in \eqref{e:700}. Recall that, for any $l\leq l_0$, we denote by ${}_{l}\Omega^\#(z)$ the sum of the polynomials $\Omega_{(t)}(z)$ for $t\leq l$ (see \eqref{lomega}), and the $d_{l}$ polynomials ${}_{l}\Omega^\#(z)$, for $\#=n+1,\ldots, n+l$, are linearly independent. Also, recall that the last $d_{l}-d_{l-1}$ (where we understand $d_1$ to be $0$) of these polynomials, ${}_{l}\Omega^\#(z)$ for $\#=n+d_{l-1}+1,\ldots, n+d_{l}$, are homogeneous of degree $l$ by \eqref{strongnorm} and, therefore, equal to $\Omega_{(l)}^\#(z)$. We conclude that $\Omega_{(l)}^\#(z)$, for $\#=n+d_{l-1}+1,\ldots, n+d_{l}$, are linearly independent. Now, we can break up the sum on the left in \eqref{polyeq:3} and rewrite this equation as follows:
\begin{equation}\Label{polyeq:4}
\sum_{\#=n+d_{l-1}+1}^{n+d_{l}}\Omega_{(l)}^\#(z)\omega_\#{}^j(\bar z)=
s_{l}^j(z)\nl z\nr^2-
\sum_{\#=n+1}^{n+d_{l-1}}\Omega_{(l)}^\#(z)\omega_\#{}^j(\bar z).
\end{equation}
For $l=2$, the sum on the right is vacuous and \eqref{polyeq:4} reduces to \eqref{polyeq:3}. Recall the following assumption from Theorem \ref{Main0}:
\medskip

\noindent
{\bf Assumption 1:} {\it There are integers $k_2,\ldots, k_{l_0}$ with $0\leq k_l\leq n-1$ such that the dimensions $d_l$ of $E_{l}(p)$ satisfy
\begin{equation}\Label{newass}
d_{l}-d_{l-1}<\sum_{j=0}^{k_l} (n-j),
\end{equation}
for $l=2,3,\ldots,l_0$ and where, for $l=2$ we understand $d_1=0$.
}
\medskip

\noindent
By applying Lemma \ref{newlemma1} to the equation \eqref{polyeq:4} with $l=2$, we conclude that there are at most $k_2$ linearly independent $\bC^{d_2}$-valued polynomials among the $(\omega_\#{}^j(\bar z))_{\#=n+1}^{n+d_2}$. Let $e_2\leq k_2$ denote the actual number of linearly independent $\bC^{d_2}$-valued polynomials among the $(\omega_\#{}^j(\bar z))_{\#=n+1}^{n+d_2}$. By moving to a nearby point $p_0\in M$ if necessary, we may assume that $e_2$ is locally constant near $p_0$. Hence, after a unitary transformation of the normal vector fields $L_{i}$ (with $i$ in its standard range $i=n+d+1,\ldots, N$), we may assume that $(\omega_\#{}^j(\bar z))_{\#=n+1}^{n+d_2}$ are linearly independent for $j=n+d+1,\ldots,n+d+e_2$ and $(\omega_\#{}^j(\bar z))_{\#=n+1}^{n+d_2}\equiv 0$ for $j\geq n+d+e_2+1$. Next, consider \eqref{polyeq:4} for $l=3$ and $j\geq n+d+e_2+1$. By the just accomplished normalization of $(\omega_\#{}^j(\bar z))_{\#=n+1}^{n+d_2}$,  we have
\begin{equation}\Label{polyeq:l=3}
\sum_{\#=n+d_{2}+1}^{n+d_{3}}\Omega_{(3)}^\#(z)\omega_\#{}^j(\bar z)=
s_{3}^j(z)\nl z\nr^2,\quad j\geq n+d+e_2+1.
\end{equation}
By Lemma \ref{newlemma1}, we conclude that there are at most $k_3$ linearly independent $\bC^{d_3-d_2}$-valued polynomials among the $(\omega_\#{}^j(\bar z))_{\#=n+d_2+1}^{n+d_3}$ for $j\geq n+d+e_2+1$. We let $e_3\leq k_3$ denote the actual number of linearly independent ones and, as above, we may assume that $e_3$ is locally constant near $p_0\in M$ and perform a unitary transformation among the $L_i$, now with $i=n+d+e_2+1,\ldots, N$, such that the $(\omega_\#{}^j(\bar z))_{\#=n+d_2+1}^{n+d_3}$ are linearly independent for $j=n+d+e_2+1,\ldots, n+d+e_2+e_3$ and zero for $j\geq n+d+e_2+e_3+1$. Proceeding inductively, we will accomplish the following normalization for each $l=2,\ldots, l_0$, with the understanding that $d_1=e_1=0$:
\begin{equation}\Label{o}
\left\{
\begin{aligned}
&(\omega_\#{}^j(\bar z))_{\#=n+d_{l-1}+1}^{n+d_l}\, \text{{\rm linearly independent for $j=n+d+e[l-1]+1,\ldots, n+d+e[l]$}},\\
&(\omega_\#{}^j(\bar z))_{\#=n+d_{l-1}+1}^{n+d_l}=0\, \text{{\rm for $j=n+d+e[l]+1,\ldots, N$}},
\end{aligned}
\right.
\end{equation}
where we have used the notation $$e[l]:=e_1+\ldots+e_l.$$
Moreover, the equations \eqref{polyeq:3}, for $l\geq 2$, reduce to
\begin{equation}\Label{polyeq:l}
\sum_{\#=n+d_{l-1}+1}^{n+d_{l}}\Omega_{(l)}^\#(z)\omega_\#{}^j(\bar z)=
s_{l}^j(z)\nl z\nr^2,\quad  j=n+d+e[l-1]+1,\ldots,n+d+e[l].
\end{equation}
Also, by Lemma \ref{newlemma1}, the polynomials $s^j_l(z)$ are all linearly independent for $2\leq l\leq l_0$ and $j=n+d+e[l-1]+1,\ldots,n+d+e[l]$. We shall now proceed under the following assumption, which clearly holds under the second assumption in \eqref{conds0} in Theorem \ref{Main0}:
\medskip

\noindent
{\bf Assumption 2:} {\it The integer
\begin{equation}\Label{newass2}
e:=e[l_0]=\sum_{l=2}^{l_0}e_l<n.
\end{equation}
}
\medskip

\noindent
We shall now introduce the following further conventions: For $l=2,\ldots,l_0$, the indices $i_l, j_l$ will run over the index set $\{n+d+e[l-1]+1,\ldots, n+d+e[l]\}$ (again with the understanding that $e_1=e[1]=0$) and the indices $i'_l, j'_l$ will run over the complementary set $\{n+d+e[l]+1,\ldots, N\}$. We shall also use the convention that $i_0,j_0$ run over the index set $\{n+d+1,\ldots, n+d+e\}$ and $i'_0,j'_0$ over the complementary set $\{n+d+e+1,\ldots, N\}$ (so that in fact e.g.\ $i'_0$ runs over the same index set as $i'_{l_0}$).
Recall that we have
\begin{equation}\Label{e:795}
\hat D_\a{}^{j'_2}=0,\quad \o_\#{}^{j'_l}{}_{\bar\nu}=0\quad \text{{\rm for $\#=n+1,\ldots, n+d_l$}},
\end{equation}
which implies, by \eqref{eq-phiia},
\begin{equation}\Label{e:800}
\hat\phi_\a{}^{j'_2}=0,\quad
\hat\phi^{j'_2}=\hat E^{j'_2}\theta
\end{equation}
and, by \eqref{e:610},
\begin{equation}\Label{e:801}
\hat\phi_\#{}^{j'_l}=(\hat D_\#{}^{j'_l}+\o_\#{}^{j'_l}{}_0)\theta\quad \text{{\rm for $\#=n+1,\ldots, n+d_l$}}.
\end{equation}
In particular, we have $\hat\phi_\#{}^{j'_{l_0}}=0\mod \theta$ for all $\#$ in its range.
Differentiating the identity in \eqref{e:801}, we obtain
\begin{equation}\Label{e:900}
d\hat\phi_\#{}^{j'_l}= d(\hat D_\#{}^{j'_l}+\o_\#{}^{j'_l}{}_0)\wedge\theta+ig_{\mu\bar\nu}(\hat D_\#{}^{j'_l}+\o_\#{}^{j'_l}{}_0)\theta^\mu\wedge\theta^{\bar\nu}\quad \text{{\rm for $\#=n+1,\ldots, n+d_l$}}
\end{equation}
and the corresponding structure equations for $\hat\phi_\#{}^{j'_l}$ reduce to (using the facts that $\theta_\#$ and $\phi_\mu{}^{j'_l}=0$ on $M$),
\begin{equation}\Label{e:1000}
\begin{aligned}
d\hat\phi_\#{}^{j'_l}=&
\hat\phi_\#{}^a\wedge\hat\phi_a{}^{j'_l}\\=&\hat\phi_\#{}^{*}\wedge\hat\phi_{*}{}^{j'_l} + \hat\phi_\#{}^{i_0}\wedge\hat\phi_{i_0}{}^{j'_l} + \hat\phi_{\#}{}^{i'_{0}}\wedge\hat\phi_{i'_{0}}{}^{j'_l}.
\end{aligned}
\end{equation}
Let us consider \eqref{e:900} and \eqref{e:1000} with $l=l_0$. If we identify the coefficients in front of $\theta^\mu\wedge\theta^{\bar\nu}$ on the right hand sides of \eqref{e:900} and \eqref{e:1000}, we obtain (by \eqref{e:801}, using also the identity \eqref{e:400})
\begin{equation}
ig_{\mu\bar\nu}(\hat D_\#{}^{j'_{0}}+\o_\#{}^{j'_{0}}{}_0)=- \o_\#{}^{i_0}{}_{\bar\nu}\o_{i_0}{}^{j'_{0}}{}_\mu.
\end{equation}
Multiplying both sides by $z^\mu \bar z^{\nu}$ and summing according to convention, we obtain (using notation analogous to that in \eqref{e:700})
\begin{equation}
i(\hat D_\#{}^{j'_{0}}+\o_\#{}^{j'_{0}}{}_0)\nl z \nr^2=-\o_\#{}^{i_0}(\bar z)\o_{i_0}{}^{j'_{0}}(z).
\end{equation}
Since $e<n$ by Assumption 2, Lemma 3.2 in \cite{Huang99} implies that $\hat D_\#{}^{j'_{0}}+\o_\#{}^{j'_{0}}{}_0=0$. Moreover, in view of \eqref{o} (which can be interpreted as saying that the polynomial $d\times e$ matrix $(\omega_\#{}^{i_{l_0}}(\bar z))$ has rank $e$ over $\bC$), we also conclude that $\o_{i_0}{}^{j'_{0}}(z)\equiv 0$, and we thus have established
\begin{equation}\Label{e:1100}
\hat D_\#{}^{j'_{0}}+\o_\#{}^{j'_{0}}{}_0=0,\quad \o_{i_0}{}^{j'_{0}}{}_\mu=0.
\end{equation}
Hence, in particular, we have $\hat\phi_\#{}^{j'_{0}}=0$. If we now differentiate the second identity in \eqref{e:800}, we obtain
$$
d\hat\phi^{j'_2}=d\hat E^{j'_2}\wedge\theta+ig_{\mu\bar\nu}\hat E^{j'_2}\theta^\mu\wedge\theta^{\bar\nu},
$$
while the structure equation for $\hat \phi^{j'_{0}}$ reads (in view of the vanishing of $\hat\phi_\a{}^{j'_2}$, and $\hat\phi_\#{}^{j'_{0}}$)
$$
d\hat\phi^{j'_{0}}=\hat\phi^{i_2}\wedge\hat\phi_{i_2}{}^{j'_{0}}+
\hat\phi^{i'_2}\wedge\hat\phi_{i'_2}{}^{j'_{0}}.
$$
By again identifying coefficients in front of $\theta^\mu\wedge\theta^{\bar\nu}$ (using $\hat\phi^{i'_2}=0$ mod $\theta$), we obtain
$$
ig_{\mu\bar\nu}\hat E^{j'_{0}}=\hat D_\mu{}^{i_2}\o_{i_2}{}^{j'_{0}}{}_{\bar\nu},
$$
which as a polynomial identity as above can be written
\begin{equation}\Label{e:1200}
i\hat E^{j'_{0}}\nl z \nr^2=\hat D^{i_2}(z)\tilde \o_{i_2}{}^{j'_{0}}(\bar z);
\end{equation}
here, we have used the notation $\tilde \o_{i_0}{}^{j'_{0}}(\bar z):=\o_{i_0}{}^{j'_{0}}{}_{\bar\nu}\bar z^\nu$ to distinguish this polynomial from that obtained by substituting $\bar z$ for $z$ in $\o_{i_0}{}^{j'_{0}}(z):=\o_{i_0}{}^{j'_{0}}{}_{\mu} z^\mu$, which we have already shown to be 0.
By Lemma 3.2 in \cite{Huang99} as above, we conclude that $\hat E^{j'_{0}}=0$ and, since  the $e_2$ polynomials $\hat D^{i_2}(z)$ are linearly independent (by \eqref{polyeq:3} with $l=2$ and \eqref{o}), we also deduce $\tilde \o_{i_2}{}^{j'_{0}}(\bar z)\equiv 0$, which is equivalent to $\o_{i_2}{}^{j'_{0}}{}_{\bar\nu}=0$. We may also write this (by using $\hat D_\mu{}^{i'_2}=0$) as
\begin{equation}\Label{e:2100}
\hat D_\mu{}^{i_0}\o_{i_0}{}^{j'_{0}}{}_{\bar\nu}=0.
\end{equation}
Since we have already established $\o_\#{}^j{}_\b=0$, $\o_{i_0}{}^{j'_{0}}{}_\b= 0$, it follows that covariant derivatives in the $\theta^\beta$ direction of the left hand side of \eqref{e:2100}  remain 0, i.e.\
\begin{equation}\Label{e:2200}
 \hat D_\mu{}^{i_0}{}_{;\b}\o_{i_0}{}^{j'_{0}}{}_{\bar\nu} +  \hat D_\mu{}^{i_0}\o_{i_0}{}^{j'_{0}}{}_{\bar\nu;\b}=0.
\end{equation}
Next, note that we have:
\begin{equation}\Label{e:2300}
\hat\phi_{i_0}{}^{j'_{0}}=\o_{i_0}{}^{j'_{0}}{}_{\bar\nu}\theta^{\bar\nu}+(\hat D_{i_0}{}^{j'_{0}}+\o_{i_0}{}^{j'_{0}}{}_0)\theta,
\end{equation}
which implies (via the structure equation for $\theta^{\bar\nu}$; see \eqref{psh2}) that
\begin{equation}\Label{e:2400}
d\hat\phi_{i_0}{}^{j'_{0}}=d\o_{i_0}{}^{j'_{0}}{}_{\bar\nu}\wedge \theta^{\bar\nu}-\o_{i_0}{}^{j'_{0}}{}_{\bar\gamma}\o_{\bar\nu}{}^{\bar\gamma}\wedge \theta^{\bar\nu}+ig_{\mu\bar\nu}(\hat D_{i_0}{}^{j'_{0}}+\o_{i_0}{}^{j'_{0}}{}_0)\theta^\mu\wedge\theta^{\bar\nu}\mod\theta
\end{equation}
The structure equation for $\hat\phi_{i_0}{}^{j'_0}$ can be expressed as
\begin{equation}\Label{e:2500}
\begin{aligned}
d\hat\phi_{i_0}{}^{j'_0}=&
\hat\phi_{i_0}{}^a\wedge\hat\phi_a{}^{j'_0}\mod\theta\\ =&
\hat\phi_{i_0}{}^{i}\wedge\hat\phi_{i}{}^{j'_0}\\ =&
\hat\phi_{i_0}{}^{j_{0}}\wedge\hat\phi_{j_{0}}{}^{j'_0}+ \hat\phi_{i_0}{}^{i'_0}\wedge\hat\phi_{i'_0}{}^{j'_0}\\
=&
\o_{j_0}{}^{j'_0}{}_{\bar\nu}\o_{i_0}{}^{j_0}\wedge \theta^{\bar\nu}-\o_{i_0}{}^{i'_0}{}_{\bar\nu}\o_{i'_0}{}^{j'_0}\wedge \theta^{\bar\nu} \mod\theta.
\end{aligned}
\end{equation}
We observe that the already established identities $\o_\#{}^{j}{}_{\mu}=0$, $\o_\#{}^{j'_0}{}_{\bar\nu}=0$ and
$\o_{j_0}{}^{j'_0}{}_\mu=0$ imply that
$$
\o_{j_0}{}^{j'_0}{}_{\bar\nu}\o_{i_0}{}^{j_0}{}_\mu=
\o_{a}{}^{j'_0}{}_{\bar\nu}\o_{i_0}{}^{a}{}_\mu,\quad
\o_{i_0}{}^{i'_0}{}_{\bar\nu}\o_{i'_0}{}^{j'_0}{}_\mu=\o_{i_0}{}^{a}{}_{\bar\nu}
\o_{a}{}^{j'_0}{}_\mu.
$$
Combining this observation with an identification of the coefficients in front of $\theta^\mu\wedge\theta^{\bar\nu}$ in \eqref{e:2400} and \eqref{e:2500} (and using the definition of the covariant derivatives) yields
\begin{equation}
\o_{i_0}{}^{j'_0}{}_{\bar\nu:\mu}+ig_{\mu\bar\nu}(\hat D_{i_0}{}^{j'_0}+\o_{i_0}{}^{j'_0}{}_0)=0.
\end{equation}
Equation \eqref{e:2200} implies
\begin{equation}\Label{e:2600}
 \hat D_\mu{}^{i_0}{}_{;\b}\o_{i_0}{}^{j'_0}{}_{\bar\nu}=  ig_{\beta\bar\nu}\hat D_\mu{}^{i_0}(\hat D_{i_0}{}^{j'_0}+\o_{i_0}{}^{j'_0}{}_0).
\end{equation}
Since the sum on the left has $e\leq n-1$ terms,
we conclude (by Lemma 3.2 in \cite{Huang99} as above) that
\begin{equation}\Label{e:2700}
 \hat D_\mu{}^{i_0}{}_{;\b}\o_{i_0}{}^{j'_0}{}_{\bar\nu}=0.
\end{equation}
Proceeding inductively, we obtain for any $l\geq 1$
\begin{equation}\Label{e:2800}
 \hat D_{\gamma_1}{}^{i_0}{}_{;\gamma_2\ldots\gamma_{l}}\o_{i_0}{}^{j'_0}{}_{\bar\nu}=0.
\end{equation}
Let us now return to the equations \eqref{e:900} and \eqref{e:1000} for general $l$ and $\#=n+1,\ldots, n+d_l$. If we identify the (wedge) multiples of $\theta$ on both sides, we obtain the following identity
\begin{multline}\Label{coveq0}
d(\hat D_\#{}^{j'_l}+\omega_\#{}^{j'_l}{}_{0})= \omega_\#{}^*(\hat D_*{}^{j'_l}+\omega_*{}^{j'_l}{}_{0})-\omega_*{}^{j'_l}(\hat D_\#{}^{*}+\omega_\#{}^{*}{}_{0})+\\
\omega_{\#}{}^{i_l}(\hat D_{i_l}{}^{j'_l}+\omega_{i_l}{}^{j'_l}{}_{0})-\omega_{i_l}{}^{j'_l}(\hat D_\#{}^{i_l}+\omega_\#{}^{i_l}{}_{0})+\\
\omega_{\#}{}^{i'_l}(\hat D_{i'_l}{}^{j'_l}+\omega_{i'_l}{}^{j'_l}{}_{0})-\omega_{i'_l}{}^{j'_l}(\hat D_\#{}^{i'_l}+\omega_\#{}^{i'_l}{}_{0})\, \mod \theta.
\end{multline}
Since the combined ranges of the indices $*,i_l,i'_l$ equal the range of the index $a$, we observe that \eqref{coveq0} simply states that the covariant derivatives
\begin{equation}\Label{coveq01}
(\hat D_\#{}^{j'_l}+\omega_\#{}^{j'_l}{}_{0})_{;\mu}=(\hat D_\#{}^{j'_l}+\omega_\#{}^{j'_l}{}_{0})_{;\bar\nu}=0,\quad
\text{{\rm for $\#=n+1,\ldots, n+d_l$}}.
\end{equation}
If we covariantly differentiate the third equation in \eqref{e:600} in the $\theta^\mu$ direction (again, the only components involved will be those appearing in this equation by \eqref{e:400}), then we obtain
\begin{equation}
\hat D_\a{}^j{}_{;\,\beta\mu} =\omega_\a{}^\#{}_{\beta;\mu}(\hat D_\#{}^j+\omega_\#{}^j{}_{0}) + \omega_\a{}^\#{}_{\beta}(\hat D_\#{}^{j}+\omega_\#{}^{j}{}_{0})_{;\mu}.
\end{equation}
If we now restrict to $j=j'_2$, then by \eqref{coveq01} we obtain
\begin{equation}
\hat D_\a{}^{j'_2}{}_{;\,\beta\mu} =\omega_\a{}^\#{}_{\beta;\mu}(\hat D_\#{}^{j'_2}+\omega_\#{}^{j'_2}{}_{0}) .
\end{equation}
Proceeding inductively, we obtain, for any $l\leq l_0$,
\begin{equation}\Label{e:2800'}
\hat D_{\gamma_1}{}^{j'_{l-1}}{}_{;\,\gamma_{2}\ldots\gamma_{l}} =\omega_{\gamma_1}{}^\#{}_{\gamma_{2};\gamma_3\ldots\gamma_{l}}(\hat D_\#{}^{j'_{l-1}}+\omega_\#{}^{j'_{l-1}}{}_{0}) .
\end{equation}
Note, in particular, that the range of the index $j_k$ belongs to the range of $j'_{l-1}$ for $k\geq l$.
If we now re-examine how the equations \eqref{e:636} are obtained inductively, we notice first that for $l=4$, by differentiating \eqref{omega3ref} with respect to $\theta^\gamma$, we obtain with $j=j_4$ (whose range belongs to that of $j'_2$)
\begin{equation}
\begin{aligned}
\omega_\a{}^\#{}_{\beta;\mu\gamma}\omega_\#{}^{j_4}{}_{\bar\nu} & = -\omega_\a{}^\#{}_{\beta;\mu}\omega_\#{}^{j_4}{}_{\bar\nu;\gamma}+ i (g_{\a\bar\nu}\hat D_\b{}^{j_4}{}_{;\mu\gamma}+g_{\b\bar\nu}\hat D_\mu{}^{j_4}{}_{;\a\gamma}
+g_{\mu\bar\nu}\hat D_\a{}^{j_4}{}_{;\b\gamma})\\
&= ig_{\gamma\bar\nu}\omega_\a{}^\#{}_{\beta;\mu}(\hat D_\#{}^{j_4}+\omega_\#{}^{j_4}{}_{0})+ i (g_{\a\bar\nu}\hat D_\b{}^{j_4}{}_{;\mu\gamma}+g_{\b\bar\nu}\hat D_\mu{}^{j_4}{}_{;\a\gamma}
+g_{\mu\bar\nu}\hat D_\a{}^{j_4}{}_{;\b\gamma})\\
&= ig_{\gamma\bar\nu}\hat D_{\alpha}{}^{j_4}{}_{;\beta\mu}+ i (g_{\a\bar\nu}\hat D_\b{}^{j_4}{}_{;\mu\gamma}+g_{\b\bar\nu}\hat D_\mu{}^{j_4}{}_{;\a\gamma}
+g_{\mu\bar\nu}\hat D_\a{}^{j_4}{}_{;\b\gamma})\\
&=  i (g_{\a\bar\nu}\hat D_\b{}^{j_4}{}_{;\mu\gamma}+g_{\b\bar\nu}\hat D_\mu{}^{j_4}{}_{;\a\gamma}
+g_{\mu\bar\nu}\hat D_\a{}^{j_4}{}_{;\b\gamma}+g_{\gamma\bar\nu}\hat D_{\alpha}{}^{j_4}{}_{;\beta\mu}),
\end{aligned}
\end{equation}
where in the second line we have used \eqref{e:640} and in the third line \eqref{e:2800'} with $l=3$ (or, equivalently, the identity just above). Proceeding inductively, we conclude that for all $l\geq 2$
\begin{equation}\Label{636'}
\omega_{\gamma_1}{}^\#{}_{\gamma_2;\gamma_3\ldots\gamma_l}\omega_\#{}^{j_l}{}_{\bar\nu}=
i \{g_{\gamma_1\bar\nu}\hat D_{\gamma_2}{}^{j_l}{}_{;\gamma_3\ldots\gamma_l}\}
\end{equation}
where $\{\cdot\}$ denotes the sum of all cyclic permutations in $\gamma_1,\ldots,\gamma_l$. It follows that \eqref{polyeq:2} can be written
\begin{equation}\Label{polyeq:l'}
\Omega^\#_{(l)}\omega_\#{}^{j_l}(\bar z)=
\sum_{\#=n+d_{l-1}+1}^{n+d_{l}}\Omega_{(l)}^\#(z)\omega_\#{}^{j_l}(\bar z)=
liD^{j_l}_{(l)}(z)\nl z\nr^2,
\end{equation}
where
\begin{equation}
D^{j_l}_{(l)}(z):=\hat D_{\gamma_2}{}^{j_l}{}_{;\gamma_3\ldots\gamma_l}z^{\gamma_1}\ldots z^{\gamma_l}
\end{equation}
and, hence, we conclude that $s^{j_l}_l(z)=D^{j_l}_{(l)}(z)$ for each $l$. The fact that these $e$ polynomials are linearly independent (by Lemma \ref{newlemma1} and the definition of the $e_l$), together with \eqref{e:2800}, now implies that
\begin{equation}
\omega_{i_0}{}^{j'_0}{}_{\bar\nu}=0.
\end{equation}
Thus, to summarize, we now have
\begin{equation}\Label{e:2900}
\hat \phi_\a{}^{j'_0}=0,\quad \hat\phi^{j'_0}=0,\quad \hat\phi_\#{}^{j'_0}=0,\end{equation}
and
$$
\hat\phi_{i_0}{}^{j'_0}=(\hat D_{i_0}{}^{j'_0}+\o_{i_0}{}^{j'_0}{}_0)\theta.
$$
Differentiating the latter identity yields
$$
d\hat\phi_{i_0}{}^{j'_0}=d(\hat D_{i_0}{}^{j'_0}+\o_{i_0}{}^{j'_0}{}_0)\wedge \theta+ig_{\mu\bar\nu}(\hat D_{i_0}{}^{j'_0}+\o_{i_0}{}^{j'_0}{}_0)\theta^\mu\wedge\theta^{\bar\nu},
$$
while the structure equation for $\hat\phi_{i_0}{}^{j'_0}$ reads
$$
d\hat\phi_{i_0}{}^{j'_0}=\hat\phi_{i_0}{}^{j_0}\wedge\hat\phi_{j_0}{}^{j'_0}+
\hat\phi_{i_0}{}^{i'_0}\wedge\hat\phi_{i'_0}{}^{j'_0}=0 \mod \theta.
$$
It follows (by considering the coefficient in front of $\theta^\mu\wedge\theta^{\bar\nu}$) that $\hat D_{i_0}{}^{j'}+\o_{i_0}{}^{j'}{}_0=0$, and hence
\begin{equation}\Label{e:3000}
\hat\phi_{i_0}{}^{j'}=0.
\end{equation}
The identities \eqref{e:2900} and \eqref{e:3000}, together with the adapted Q-frames in Section 8 of \cite{EHZ04} and the arguments (more or less verbatim) in the last paragraph of the proof of Theorem 2.2 in \cite{EHZ04}, now show that $f(M)$ is contained in a complex plane of dimension $n+d+e+1$. Since $e\leq k$ by definition, we have proved the conclusion of Theorem \ref{Main0}. \qed

\section{An Example illustrating Lemma $\ref{newlemma1}$}\Label{Exlem1}

In this section, we shall give a prototypical example showing that the estimate for $\dim S$ in Lemma $\ref{newlemma1}$ is sharp. Let
\begin{equation}\Label{monoorder}
z_1^2,z_1z_2,\ldots, z_1z_{n},z_2^2,z_2z_3,\ldots, z_2z_{n},z_3^3,z_3z_4,\ldots,z_3z_{n},\ldots, z_{n}^2
\end{equation}
be an ordering of the $n(n+1)/2$ monomials of degree 2 in $z=(z_1,\ldots, z_n)$. For $1\leq k\leq n-1$, let
$$
m:=\sum_{j=0}^k(n-j)
$$
and let $p(z)=(p_1(z),\ldots, p_m(z))$ be the $\bC^m$-valued polynomial consisting of the first $m$ monomials in \eqref{monoorder}, i.e.\ $p_1(z)=z_1^2$ and $p_m(z)=z_kz_{n}$. Let $q^1(\bar z)=(q^1_1(\bar z),\ldots, q^1_{m}(\bar z))$ denote the $\bC^m$-valued linear polynomial in $\bar z$ given by $q^1(z)=(\bar z_1,\ldots,\bar z_{n},0,\ldots, 0)$. We note that
$$
\sum_{j=1}^mp_j(z)q^1_j(\bar z)=z_1\nl z\nr^2.
$$
Next, if $k\geq 2$, we let $q^2(\bar z)$ be given by $q^2(\bar z):=(0,\bar z_1,0,\ldots, \bar z_2,\bar z_3,\ldots,\bar z_{n},0,\ldots, 0)$, where $\bar z_2$ appears as the $(n+1)$th component (corresponding to the component $z_2^2$ in $p(z)$). We obtain
$$
\sum_{j=1}^mp_j(z)q^2_j(\bar z)=z_2\nl z\nr^2.
$$
Clearly, $q^1(\bar z)$ and $q^2(\bar z)$ are linearly independent. If $k\geq 3$, then we can define another linearly independent solution $q^3(\bar z)$ as follows, $$q^3(\bar z):=(0,0,\bar z_1,0,\ldots,0,\bar z_2,0,\ldots, 0,\bar z_3,\ldots, \bar z_n,0,\ldots, 0),$$ where $\bar z_2$ appears as the $(n+2)$th component (corresponding to the component $z_2z_3$ in $p(z)$) and $\bar z_3$ as the $(2n)$th component (corresponding to the component $z_3^2$ in $p(z)$). We get
$$
\sum_{j=1}^mp_j(z)q^3j(\bar z)=z_3\nl z\nr^2.
$$
We are confident that the reader recognizes the pattern and realizes that we can always construct $k$ linearly independent solutions $q^1(\bar z),\ldots, q^k(\bar z)$ to \eqref{polyeqdeg2}. Lemma \ref{newlemma1} asserts that we cannot construct more than this. 

\def\cprime{$'$}

\end{document}